% ------------------------------------------------------------------------
% AMS-LaTeX Paper ********************************************************
% ------------------------------------------------------------------------
% Submitted:      Trans.Amer.Math.Soc. in February 1995
% Final Version:  July 1995
% Accepted:       June 1995
% ------------------------------------------------------------------------
% This is a journal top-matter template file for use with AMS-LaTeX.
%%%%%%%%%%%%%%%%%%%%%%%%%%%%%%%%%%%%%%%%%%%%%%%%%%%%%%%%%%%%%%%%%%%%%%%%%%

\documentclass[12pt]{amsart}
\usepackage[a4paper]{geometry}
\geometry{top=1.0in, bottom=1.0in, left=1.0in, right=1.0in}
\usepackage{color}
\usepackage{comment}
%\usepackage[active]{srcltx} % SRC Specials for DVI Searching

% Over-full v-boxes on even pages are due to the \v{c} in author's name
\vfuzz2pt % Don't report over-full v-boxes if over-edge is small

% THEOREM Environments ---------------------------------------------------
 \newtheorem{thm}{Theorem}[section]
 \newtheorem{cor}[thm]{Corollary}
 \newtheorem{lem}[thm]{Lemma}
 
 \theoremstyle{definition}
 \newtheorem{defn}{Definition}[section]
 \theoremstyle{remark}
 
 \numberwithin{equation}{section}
\newcommand{\grad}{\mathop{\mathrm{grad}}}
\newcommand{\trace}{\mathop{\mathrm{trace}}}
\newcommand{\dive}{\mathop{\mathrm{div}}}

%%% ----------------------------------------------------------------------
\begin{document}
\title{\textbf{Biconservative hypersurfaces in space forms $\overline{M}^{\lowercase{n+1}}(\lowercase{c})$}}
\author{Ram Shankar Gupta and Andreas Arvanitoyeorgos}
 \maketitle
% -----------------------------------------------------------
\begin{abstract}
  In this paper we study biconservative hypersurfaces $M$
in space forms $\overline M^{n+1}(c)$ with four distinct principal
curvatures whose second fundamental form has constant norm. We prove
that every such  hypersurface has constant mean
curvature and constant scalar curvature. \\
\\
\textbf{2020 Mathematics Subject Classification:}  53C40; 53C42\\
\textbf{Key Words:} Space form, Mean curvature vector,
Biconservative hypersurface, biharmonic submanifold.
\end{abstract}

%%% ----------------------------------------------------------------------
\maketitle
%%% ----------------------------------------------------------------------
\section{\textbf{Introduction}}

In the last three decades, one of the interesting research topics in
Differential Geometry is the study of biharmonic maps, in
particular, biharmonic immersions, between Riemannian manifolds. A
generalization of harmonic maps, proposed in 1964 by Eells and
Sampson \cite{5}, these maps are critical points of the bienergy
functional, obtained by integrating the squared norm of the tension
field, and are characterized by the vanishing of the bitension
field. Another interesting research direction, derived from here, is
the study of {\it biconservative} submanifolds, i.e., those submanifolds
for which only the tangent part of the bitension field vanishes.

In 1924, Hilbert pointed that the stress-energy tensor associated to
a functional $E$, is a conservative symmetric 2-covariant tensor $S$
at the critical points of $E$, i.e. $\dive S = 0$ (\cite{3}). For the
bienergy functional $E_2$, Jiang defined the stress-bienergy tensor
$S_2$ and proved that it satisfies $\dive S_2 =
-\langle\tau_2(\phi), d\phi\rangle$ (\cite{4}). Thus, if $\phi$ is
biharmonic, then $\dive S_2 = 0$. For biharmonic submanifolds, from
the above relation, we see that $\dive S_2 = 0$ if and only if the
tangent part of the bitension field vanishes. In particular, an
isometric immersion $\phi: (M, g)\rightarrow(N, h)$ is called
biconservative if $\dive S_2 = 0$.

 In a different setting, B. Y.
Chen defined {\it biharmonic} submanifolds $M$ of the Euclidean space as
those with harmonic mean curvature vector field, that is $\Delta \vec{H}
=0$, where $\Delta$ is the  Laplacian operator. If we apply the
definition of biharmonic maps to Riemannian immersions into the
Euclidean space, we recover Chen's notion of biharmonic submanifolds.
Thus biharmonic Riemannian immersions can also be thought of as a
generalization of Chen's biharmonic submanifolds. The biharmonic
submanifolds were studied in
\cite{gu4,biha7,r20,r23,r13,biha15,r14,r18,12} and references therein.
\let\thefootnote\relax\footnote{\textbf{Acknowledgments:}
This work is supported by award of grant under FRGS for the year
2017-18, F.No. GGSIPU/DRC/Ph.D./Adm./2017/493.  The authors would like to thank the referee for useful suggestions on the paper.}

The biconservative submanifolds were studied and classified in
$\mathbb{E}^3$ and $\mathbb{E}^4$ by Hasanis and Vlachos (\cite{6}), in
which the biconservative hypersurfaces were called $H$-hypersurfaces.
The terminology ``biconservative" was first introduced in \cite{cadmo}.
The classification of $H$-hypersurfaces with three distinct curvatures
in Euclidean space of arbitrary dimension were obtained by Turgay in
\cite{k2}.  The classification of biconservative hypersurfaces in
 $\mathbb{E}^5_2$ with diagonal shape operator having three distinct principal
 curvatures was obtained by  Upadhyay and  Turgay in \cite{13}. Also, the first author and Sharfuddin  proved that every biconservative Lorentz hypersurface in
 $\mathbb{E}^{n+1}_1$ with complex eigenvalues has
constant mean curvature (\cite{gu1}). For more work on biconservative
hypersurfaces in pseudo-Euclidean spaces, please see (\cite{13,gu1})
and references therein.

 The constant mean curvature (CMC) biconservative surfaces
in $\mathbb{S}^n \times \mathbb{R}$ and $\mathbb{H}^n \times
\mathbb{R}$ were studied in \cite{7} by Fetcu et al. A complete classification of
CMC biconservative surfaces in a four-dimensional space form was
given in \cite{gu3} by Montaldo et al. Further, the classification of biconservative
hypersurfaces in $\mathbb{S}^4$ and $\mathbb{H}^4$ was obtained in
\cite{gu2} by Turgay and Upadhyay. 
A survey about biharmonic and biconservative hypersurfaces in space forms is provided in \cite{FeOn} by  Fetcu et al. 
In  \cite{gu5} the first author studied the biconservative hypersurfaces in Euclidean 5-space with constant norm
of second fundamental form and one of the results states that every such hypersurface has constant mean curvature. Finally, some recent results on 
complete biconservative surfaces and on closed biconservative surfaces in space forms was obtained by Nistor and Oniciuc (\cite{NiCe})  and Montaldo and by Pampano (\cite{MoPa}). 

\smallskip
 In view of the above developments, in the present paper we study biconservative
hypersurfaces in a space form $\overline{M}^{n+1}(c)$  with at most four
distinct principal curvatures, whose second fundamental form has
constant norm. The main result is  the following:

\begin{thm}
Every biconservative  hypersurface $M$ in a space form
$\overline{M}^{n+1}(c)$ with at most four distinct principal curvatures,
whose second fundamental form has constant norm, is of constant mean
curvature and of constant scalar curvature.
 \end{thm}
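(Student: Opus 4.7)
My plan is to argue by contradiction. Assume $H$ is non-constant, so $\nabla H\neq 0$ on some open set $U\subset M$. The biconservative condition for a hypersurface in a space form is equivalent to $A(\nabla H)=-\tfrac{n}{2}H\,\nabla H$, so $e_1:=\nabla H/|\nabla H|$ is a principal direction with principal curvature $\lambda_1=-\tfrac{n}{2}H$. Extending $\{e_1\}$ to a local orthonormal frame $\{e_1,\dots,e_n\}$ of principal directions, I label the four distinct principal curvatures $\lambda_1,\mu_2,\mu_3,\mu_4$ with multiplicities $m_1,m_2,m_3,m_4$, $\sum_k m_k=n$, arranged so that $e_1$ lies in the $\lambda_1$-eigenspace. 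Since $\nabla H\parallel e_1$, one has $e_i(H)=0$ for every $e_i\perp e_1$.

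The main tools are the Codazzi equations $(\nabla_XA)Y=(\nabla_YA)X$ for a hypersurface in an ambient of constant curvature, which in the principal frame yield
\[
e_j(\lambda_i)=(\lambda_i-\lambda_j)\,\langle\nabla_{e_i}e_i,e_j\rangle,\qquad
(\lambda_i-\lambda_l)\,\langle\nabla_{e_j}e_i,e_l\rangle=(\lambda_j-\lambda_l)\,\langle\nabla_{e_i}e_j,e_l\rangle
\]
for mutually distinct eigenvalues, together with the Gauss equation giving the intrinsic sectional curvature $K(e_i,e_j)=\lambda_i\lambda_j+c$. In particular $e_i(\lambda_j)=0$ whenever $\lambda_i=\lambda_j$, so each principal curvature is constant along its own eigendistribution and only the cross-eigenspace derivatives remain to be determined. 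Additionally, since $\nabla H$ is a gradient, the symmetry of $\mathrm{Hess}\,H$ forces $(\nabla_{e_i}e_j)(H)=(\nabla_{e_j}e_i)(H)$, producing further relations among the connection coefficients $\langle\nabla_{e_i}e_j,e_1\rangle$.

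I would then combine the Codazzi system with three scalar constraints: the trace identity $\trace A=nH$, the biconservative identity $\lambda_1=-\tfrac{n}{2}H$, and the constant-norm hypothesis $m_1\lambda_1^2+m_2\mu_2^2+m_3\mu_3^2+m_4\mu_4^2=\mathrm{const}$. Differentiating these along $e_1$ and along directions orthogonal to $e_1$, and substituting back into the Codazzi identities, yields a coupled system for the functions $H,\mu_2,\mu_3,\mu_4$ and for the surviving connection coefficients. The target is an overdetermined ordinary differential equation for $H$ along the integral curves of $e_1$, whose only solutions are constants; this contradicts $\nabla H\neq 0$ on $U$ and shows $H$ must be globally constant. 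Constancy of the scalar curvature then follows from the Gauss identity $\rho=n(n-1)c+n^2H^2-|h|^2$, which is a function of the constant quantities $H$ and $|h|^2$.

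The main obstacle is the algebraic and combinatorial complexity of the four-eigenvalue case: compared with the three-eigenvalue classifications in \cite{k2,13,gu5}, many more cross-eigenspace connection coefficients $\langle\nabla_{e_i}e_j,e_k\rangle$ survive \emph{a priori}, and the argument will very likely split into subcases according to the multiplicity pattern $(m_1,m_2,m_3,m_4)$. Uniformly organising these subcases and carefully eliminating the off-diagonal connection terms until each branch reduces to the same ODE-based contradiction is what I expect to be the most delicate technical step.
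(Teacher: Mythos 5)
Your setup coincides with the paper's: assume $H$ non-constant, take $e_1\parallel\nabla H$ with $\lambda_1=-\tfrac{n}{2}H$, diagonalize $\mathcal{A}$, and exploit Codazzi, Gauss, the trace identity, and the constant-norm condition. But the proposal stops exactly where the proof begins. Everything after ``yields a coupled system \dots whose only solutions are constants'' is the entire content of the theorem, and the mechanism you gesture at (an overdetermined ODE along the integral curves of $e_1$) is not by itself a proof: equations such as $e_1(\omega_{ii}^{1})-(\omega_{ii}^{1})^2=c+\lambda_1\lambda_i$ admit plenty of non-constant solutions, so you must explain what forces the overdetermination to collapse. In the paper this is done algebraically, not by ODE analysis: after repeated differentiation and substitution one extracts several \emph{polynomial} relations among $\lambda_1,\lambda_u,\lambda_v,\lambda_w$ (and two auxiliary functions $\alpha,\phi$ in one branch), and successive resultants (Lemma 2.1) eliminate all variables except $\lambda_1$, leaving a polynomial equation in $\lambda_1$ with constant real coefficients; this is what forces $\lambda_1$, hence $H$, to be constant. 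Nothing in your outline produces such a closed polynomial constraint.

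Two specific missing ingredients are worth naming. First, the paper's Lemma 4.1: the constant-norm hypothesis is used to prove that the cross-eigenspace coefficients $\omega_{vv}^{u}=\langle\nabla_{e_v}e_v,e_u\rangle$ vanish for $u,v$ in different eigendistributions. This is not a routine consequence of Codazzi; it requires setting up $2\times 2$ linear systems in $\omega_{vv}^{u},\omega_{ww}^{u}$ and forcing their determinants to vanish, and the authors explicitly flag it as the step that does not extend beyond four distinct curvatures. Without it the curvature identities $g(R(e_u,e_v)e_u,e_v)=c+\lambda_u\lambda_v$ retain uncontrolled off-diagonal terms and the elimination cannot start. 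Second, the argument splits not on the multiplicity pattern $(m_1,m_2,m_3,m_4)$ (the multiplicities $p,q,r$ are carried symbolically throughout) but on whether the quantity $a_1=(\lambda_w-\lambda_u)\omega_{vv}^{1}+(\lambda_u-\lambda_v)\omega_{ww}^{1}+(\lambda_v-\lambda_w)\omega_{uu}^{1}$ vanishes; when $a_1=0$ one gets the ansatz $\omega_{ii}^{1}=\alpha\lambda_i+\phi$ and a separate elimination in $(\alpha,\phi,\lambda_1)$. So the case structure you anticipate is not the one that makes the proof work. (Minor point in your favour: your Gauss formula $\rho=n(n-1)c+n^2H^2-|h|^2$ has the correct sign; the conclusion about constant scalar curvature is immediate either way once $H$ and $|h|^2$ are constant.)
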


\smallskip
 Also, the class of biconservative hypersurface in space forms contains some  other class of  hypersurfaces, that is, the  equation (\ref{e:b9}) is fulfilled by a biharmonic submanifold $\triangle \vec
 {H}=0$ and a hypersurface with $\triangle \vec
 {H}=\lambda \vec{H}$ \cite{biha7}. 
 
   Therefore,  from Theorem 1.1, we have following:
\begin{cor}
Every biharmonic  hypersurface $M$ in a space form
$\overline{M}^{n+1}(c)$ with at most four distinct principal curvatures,
whose second fundamental form has constant norm, is of constant mean
curvature and of constant scalar curvature. \end{cor}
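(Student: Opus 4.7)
For a hypersurface in $\overline{M}^{n+1}(c)$ the biconservative condition is equivalent to
$$ 2A(\nabla H) + nH\,\nabla H = 0, $$
where $A$ is the shape operator. I argue by contradiction: assume $H$ is not constant, let $U=\{p\in M:\nabla H(p)\neq 0\}$, and set $e_1=\nabla H/|\nabla H|$ on $U$. The equation says that $e_1$ is a principal direction with eigenvalue $\lambda_1:=-nH/2$, and $e_j(H)=0$ whenever $e_j\perp e_1$. Shrinking $U$, I may assume there are exactly four distinct principal curvatures $\lambda_1,\dots,\lambda_4$ with locally constant multiplicities $m_1,\dots,m_4$ and smooth eigendistributions $D_1,\dots,D_4$; a standard Codazzi computation applied to two vectors in the same eigendistribution shows that a principal curvature whose gradient is nonzero along its own eigendistribution must have one-dimensional eigenspace, so $m_1=1$ and $e_1$ spans $D_1$.

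Next I combine two algebraic constraints with Codazzi. The scalar relations
$$ nH=\sum_i m_i\lambda_i, \qquad |B|^2=\sum_i m_i\lambda_i^2=\mathrm{const}, $$
give, after differentiation along $e_1$ and using $\lambda_1=-nH/2$, two linear identities among $e_1(\lambda_2),e_1(\lambda_3),e_1(\lambda_4)$ and $e_1(H)$. The Codazzi identities $(\nabla_{e_i}A)e_j=(\nabla_{e_j}A)e_i$ in a frame adapted to $D_1,\dots,D_4$ yield, for $j\ge 2$, expressions $e_1(\lambda_j)=(\lambda_1-\lambda_j)\,\omega^{\,j}_{j1}$ with $\omega^{\,j}_{j1}=\langle\nabla_{e_j}e_1,e_j\rangle$, while the identities $e_j(H)=0$ (hence $e_j(\lambda_1)=0$) force various off-diagonal connection coefficients linking distinct $D_j$'s to vanish.

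The main obstacle is to package these ingredients into an overdetermined system forcing $e_1(H)=0$. With four distinct eigenvalues the two scalar constraints alone leave algebraic slack absent in the two- or three-eigenvalue settings, so I expect to iterate: apply $e_1$ a second time to the Codazzi-compatibility identities, substitute the $e_1$-derivatives of the connection coefficients supplied by the Gauss equations in the adapted frame (which involve $c$ and products $\lambda_i\lambda_j$), and reduce to a polynomial identity in $H$ alone whose coefficients depend only on $c$ and the constant $|B|^2$. A case analysis by the multiplicity vector $(1,m_2,m_3,m_4)$ is then needed; when some $m_i\ge 2$ one must first verify, again via Codazzi, that $D_i$ is integrable and that $\lambda_i$ is constant along its leaves before the one-variable reduction in $H$ becomes available. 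Showing that the resulting polynomial has only isolated roots contradicts $e_1(H)\neq 0$ on $U$ and proves $H$ is locally, hence globally, constant.

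Once $H$ is constant, the Gauss equation $R=n(n-1)c+n^2H^2-|B|^2$ for a hypersurface in a space form gives immediately that the scalar curvature $R$ is constant as well, proving Theorem~1.1. Corollary~1.2 then follows at once, since every biharmonic submanifold is biconservative.
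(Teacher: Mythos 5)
Your reduction of the corollary is exactly the paper's: the tangent part of $\Delta \vec{H}=0$ is the biconservative equation $2\mathcal{A}(\grad H)+nH\grad H=0$, so every biharmonic hypersurface is biconservative and the statement follows at once from Theorem 1.1, which is precisely how the paper argues. Note, however, that the bulk of your write-up is a sketch of Theorem 1.1 itself rather than of the corollary; as a proof of the theorem it is only a plan (the four-curvature elimination you ``expect to iterate'' is exactly the hard part, carried out in the paper via Lemmas 4.1--4.2 and repeated resultant computations), but since the corollary is stated as a consequence of the theorem, the one-line reduction in your final sentence suffices.
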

 \begin{cor}
Every hypersurface $M$ in a space form
$\overline{M}^{n+1}(c)$ satisfying $\triangle \vec
 {H}=\lambda \vec{H}$ with at most four distinct principal curvatures,
whose second fundamental form has constant norm, is of constant mean
curvature and of constant scalar curvature.
 \end{cor}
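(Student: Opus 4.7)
The plan is to reduce Corollary~1.3 to Theorem~1.1 by verifying that any hypersurface of $\overline{M}^{n+1}(c)$ satisfying $\triangle\vec{H}=\lambda\vec{H}$ automatically fulfils the biconservativity condition~(\ref{e:b9}). Once that reduction is in place, no further work is required: the hypotheses on the number of distinct principal curvatures and on the constancy of the norm of the second fundamental form are identical in the two statements, so Theorem~1.1 applies directly and yields both conclusions.

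To establish the reduction, I would first recall the standard splitting of the bitension field for an isometric immersion into a space form. For a hypersurface with unit normal $\xi$ and mean curvature function $H$, the mean curvature vector $\vec{H}=H\xi$ is purely normal. A direct use of the Gauss and Weingarten formulas produces a decomposition of the relevant second-order operator into a tangential part of the shape $2A(\grad H)+nH\grad H$, where $A$ is the shape operator, and a normal part involving $\triangle H$, the squared norm of the second fundamental form times $H$, and the ambient term $ncH$. The biconservativity equation~(\ref{e:b9}) is precisely the vanishing of this tangential expression, in agreement with the characterisation of biconservative submanifolds recalled in the introduction (``only the tangent part of the bitension field vanishes'').

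Now, since $\lambda\vec{H}=\lambda H\xi$ is again purely normal, the equation $\triangle\vec{H}=\lambda\vec{H}$ forces the tangential component of $\triangle\vec{H}$ to vanish identically on $M$; the tangential component of the bitension field then reduces to this tangential component, which is zero. Hence~(\ref{e:b9}) holds, so $M$ is biconservative. This is exactly the fact recorded in~\cite{biha7} and reiterated in the paragraph preceding the corollary.

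With biconservativity in hand, Theorem~1.1 applies verbatim and $M$ has constant mean curvature and constant scalar curvature. The main difficulty is therefore not in Corollary~1.3 itself but upstream, in the proof of Theorem~1.1; the passage from $\triangle\vec{H}=\lambda\vec{H}$ to biconservativity is purely formal and relies only on the fact that for a hypersurface in a space form the mean curvature vector is normal to $M$.
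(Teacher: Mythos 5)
Your proposal is correct and follows exactly the paper's route: the paper justifies Corollary~1.3 by observing (citing \cite{biha7}) that $\triangle\vec H=\lambda\vec H$ forces the tangential part of $\triangle\vec H$ to vanish because $\lambda\vec H=\lambda H\xi$ is purely normal, so (\ref{e:b9}) holds and Theorem~1.1 applies. Your write-up merely makes this one-line reduction explicit; there is no gap and no divergence from the paper's argument.
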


 The paper is organized as follows:  In Section 2 we give some preliminaries.  In Section 3 we discuss biconservative hypersurfaces in space forms with four distinct principal curvatures.  We obtain some condition for eigenvalues (Lemma 3.1) and expressions of covariant derivatives of an orthonormal frame in terms of connection forms (Lemma 3.2).  In Section 4 we use the condition that the second fundamental form has constant norm, and obtain further simplifications of the connection forms (Lemmas 4.1 and 4.2).  Section 5 is devoted to the proof of Theorem 1.1.  In the process, a technical tool about resultant of polynomials is extensively used (Lemma 2.1).
 
 Finally, we would like to note that Lemma \ref{l:3}  is crucial to obtain our results in the case of at most four distinct principal curvatures. In case of more than four distinct principal curvatures, it seems difficult to obtain an analogue of this lemma.  We may discuss this in a separate paper.

\section{\textbf{Preliminaries}}

  Let ($M, g$) be a hypersurface isometrically immersed in a $(n+1)$-dimensional space form
$(\overline{M}^{n+1}(c), \overline g)$ and $g = \overline g_{|M}$.
We denote by $\xi$ a unit normal vector to $M$ where
$\overline{g}(\xi, \xi)= 1$, and by $h$ and $ \mathcal{A}$ the second fundamental form and the shape operator of $M$, where $h(X, Y)=g(\mathcal{A}(X), Y)$, for $ X, Y \in \Gamma(TM)$.

 The mean curvature $H$ of $M$ is given by
\begin{equation}\label{e:b4}
H = \frac{1}{n} \trace \mathcal{A}.
\end{equation}

 Let $\nabla$ denotes the Levi-Civita connection on $M$. Then, the Gauss and Codazzi equations are given by
\begin{equation}\label{e:b5}
R(X, Y)Z = c(g(Y, Z) X - g(X, Z) Y)+g(\mathcal{A}Y, Z) \mathcal{A}X - g(\mathcal{A}X, Z) \mathcal{A}Y,
\end{equation}
\begin{equation}\label{e:b6}
(\nabla_{X}\mathcal{A})Y = (\nabla_{Y}\mathcal{A})X,
\end{equation}
respectively, where $R$ is the curvature tensor and
\begin{equation}\label{e:b7}
(\nabla_{X}\mathcal{A})Y = \nabla_{X}\mathcal{A} Y-
\mathcal{A}(\nabla_{X}Y)
\end{equation}
for all $ X, Y, Z \in \Gamma(TM)$.

 A submanifold satisfying $\triangle \vec {H} = 0$,
is called {\it biharmonic submanifold} (\cite{biha7}).
The biharmonic
equation  for $M^n$  in  a $(n+1)$-dimensional space form $\overline
M^{n+1}(c)$  can be decomposed into its normal and tangent part (\cite{biha7}, \cite{C-On}),  that is
\begin{eqnarray}
2\mathcal{A} (\grad H)+ n  H \grad H = 0,\label{e:b9}\\
 \Delta H+H({\rm trace}(\mathcal{A}^2)-n c)=0.
\end{eqnarray}
\begin{defn}
A submanifold satisfying equation (\ref{e:b9})
 is called biconservative.
\end{defn}
In the present work we are concerned with biconservative
hypersurfaces $M^n$  in  a $(n+1)$-dimensional space form $\overline
M^{n+1}(c)$.

%\begin{equation}\label{e:b9}
%2\mathcal{A} (\grad H)+ n  H \grad H = 0.
%\end{equation}

\smallskip
The following algebraic lemma will be useful to get our result:

\begin{lem} {\rm (\cite[Theorem 4.4, pp. 58--59]{ke})} Let D be a unique factorization domain, and let $f(X) =
a_{0}X^{m} +a_{1}X^{m-1} + \dots + a_{m}, g(X) = b_{0}X^{n} +
b_{1}X^{n-1} + \dots + b_{n}$ be two polynomials in $D[X]$. Assume
that the leading coefficients $a_{0}$ and $b_{0}$ of $f(X)$ and
$g(X)$ are not both zero. Then $f(X)$ and $g(X)$ have a nonconstant
common factor iff the resultant $\Re(f, g)$ of $f$ and $g$ is zero,
where
\begin{center}
$\Re(f,g)=
\begin{vmatrix}
  a_{0} & a_{1} & a_{2} & \cdots & a_{m} &   &   &   \\
    & a_{0} & a_{1} & \cdots & \cdots & a_{m} &   &   \\
    &   & \ddots & \ddots & \ddots & \ddots & \ddots &   \\
   &   &   & a_{0} & a_{1} & a_{2} & \cdots & a_{m} \\
  b_{0} & b_{1} & b_{2} & \cdots & b_{n} &   &   &   \\
    & b_{0} & b_{1} & \cdots & \cdots & b_{n} &  &   \\
    &   & \ddots & \ddots & \ddots & \ddots & \ddots &   \\
    &   &   & b_{0} & b_{1} & b_{2} & \cdots & b_{n} \\
\end{vmatrix},
$
\end{center}
and there are n rows of ``a" entries and m rows of ``b" entries.
\end{lem}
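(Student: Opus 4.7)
The plan is to reduce the statement to a standard linear-algebra criterion over the field of fractions $K=\mathrm{Frac}(D)$ of $D$, and then to interpret the resulting homogeneous linear system in terms of the Sylvester matrix whose determinant is $\Re(f,g)$.

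First I would pass from $D[X]$ to $K[X]$. Since $D$ is a UFD, Gauss's lemma implies that $f$ and $g$ share a nonconstant common factor in $D[X]$ if and only if they do in $K[X]$: a nonconstant common factor in $K[X]$ can be cleared of denominators and reduced to a primitive polynomial, which (again by Gauss) must then divide the primitive parts of $f$ and $g$ in $D[X]$; conversely, a common factor in $D[X]$ is automatically a common factor in $K[X]$. Hence it suffices to prove the equivalence in the PID $K[X]$.

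Second, I would establish the following bridging equivalence in $K[X]$: the polynomials $f$ and $g$ share a nonconstant common factor iff there exist $u,v\in K[X]$, not both zero, with $\deg u\le n-1$ and $\deg v\le m-1$, such that $uf+vg=0$. For the forward direction, if $d=\gcd(f,g)$ has positive degree write $f=d\tilde f$ and $g=d\tilde g$; then the pair $(u,v)=(\tilde g,-\tilde f)$ satisfies $uf+vg=0$ with the required degree bounds. For the converse, from $uf=-vg$ with $\deg u<n$, unique factorization in $K[X]$ forces $f$ and $g$ to share an irreducible factor, since otherwise $g\mid u$, contradicting $\deg u<n\le\deg g$. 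The hypothesis that $a_0$ and $b_0$ are not both zero ensures that at least one of $f$, $g$ has its nominal degree, which is what makes this degree bookkeeping legitimate.

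Third, I would translate the existence of such $u$ and $v$ into a homogeneous linear system. Writing $u=u_0X^{n-1}+\dots+u_{n-1}$ and $v=v_0X^{m-1}+\dots+v_{m-1}$, the condition $uf+vg=0$ is equivalent to the vanishing of each coefficient of $X^k$ for $k=0,1,\dots,m+n-1$. This gives $m+n$ homogeneous linear equations in the $m+n$ unknowns $u_0,\dots,u_{n-1},v_0,\dots,v_{m-1}$, whose coefficient matrix is precisely the transpose of the $(m+n)\times(m+n)$ Sylvester matrix displayed in the statement. A nontrivial solution exists iff this square matrix is singular, i.e.\ iff $\Re(f,g)=0$. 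Combining the three reductions yields the lemma. The main obstacle is the converse step in $K[X]$ when one leading coefficient is zero: one has to keep careful track of the nominal degrees $m,n$ (used to form the Sylvester matrix) versus the actual degrees of $f,g$, and the hypothesis ``$a_0,b_0$ not both zero'' is exactly the condition that makes this bookkeeping consistent.
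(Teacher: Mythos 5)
The paper does not prove this lemma at all: it is quoted verbatim as a known result from Kendig (Theorem~4.4, pp.~58--59 of \cite{ke}) and used as a black box, so there is no in-paper argument to compare against. Your proposal is the standard textbook proof of the resultant criterion --- reduce to $K=\mathrm{Frac}(D)$ via Gauss's lemma, characterize the existence of a nonconstant common factor by the existence of $u,v$ with $\deg u\le n-1$, $\deg v\le m-1$ and $uf+vg=0$, and observe that this is a nontrivial linear dependence among the rows $X^{n-1}f,\dots,f,X^{m-1}g,\dots,g$, i.e.\ the vanishing of the Sylvester determinant --- and it is essentially correct; it is in substance the proof in the cited source. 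One small point: in your converse step you argue only via ``$g\mid u$ contradicts $\deg u<n\le\deg g$'', which presupposes $b_0\ne 0$; when $b_0=0$ (so $\deg g<n$) you must instead run the symmetric argument through $f$, using $a_0\ne 0$ to get $\deg f=m$ and $f\mid v$ contradicting $\deg v\le m-1$. You flag this bookkeeping issue explicitly at the end, so it is a matter of writing out the symmetric case rather than a gap in the idea; with that case added (and the degenerate case where one of $f,g$ is the zero polynomial dispatched separately), the proof is complete.
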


\section{\textbf{Biconservative hypersurfaces in
 space forms with four distinct principal curvatures}}

In this section, we study biconservative hypersurfaces with four
distinct principal curvatures in space forms. In view of
(\ref{e:b9}), it is easy to see that any CMC hypersurface is
biconservative. Therefore, we are interested in the study of
non CMC biconservative hypersurfaces in a space form $\overline
M^{n+1}(c)$.

We assume that the mean curvature is not
constant, and we will end up to a contradiction.
This implies that $\grad H\neq0$, hence there exists an
 open connected subset $U$ of $M^n$ with
grad$_{x}H \neq 0$, for all $x\in U$. From (\ref{e:b9}), it is easy
to see that $\grad H$ is an eigenvector of the shape operator
$\mathcal{A}$ with the corresponding principal curvature
$-\frac{nH}{2}$.

We denote by $A, B$, the following sets
$$ A=\{1, 2,\dots,n\},\quad  B=\{2, 3,\dots, n\}.$$

Without losing generality, we choose $e_{1}$ in the direction of
$\grad H$. Then, the shape operator $\mathcal{A}$ of a hypersurface
$M^n$ in a space form $\overline M^{n+1}(c)$ takes the following
form with respect to a suitable orthonormal frame $\{e_{1},
e_{2},\dots, e_{n}\}$,
\begin{equation}\label{e:c1} \mathcal{A}e_i=\lambda_ie_i, \quad i \in
A,
\end{equation}
where $\lambda_i$ is the eigenvalue corresponding to the eigenvector $e_i$
of the shape operator.

 Then $\grad H$ can be expressed as
\begin{equation}\label{e:c2}
\grad H =\sum_{i=1}^{n} e_{i}(H)e_{i}.
\end{equation}

 As we have taken $e_{1}$ parallel to $\grad H$, it follows that
\begin{equation}\label{e:c3}
e_{1}(H)\neq 0, \quad e_{i}(H)= 0, \quad i \in B.
\end{equation}

  We express
\begin{equation}\label{e:c4}
\nabla_{e_{i}}e_{j}=\sum_{m=1}^{n}\omega_{ij}^{m}e_{m}, \quad i, j
\in A.
\end{equation}

From (\ref{e:c4}) and using the compatibility conditions
$(\nabla_{e_{k}}g)(e_{i}, e_{i})= 0$ and $(\nabla_{e_{k}}g)(e_{i},
e_{j})= 0$, we obtain
\begin{equation}\label{e:c5}
\omega_{ki}^{i}=0, \hspace{1 cm} \omega_{ki}^{j}+ \omega_{kj}^{i}
=0,
\end{equation}
for $i \neq j, $ and $i, j, k \in A$.

Taking $X=e_{i}, Y=e_{j}$ in (\ref{e:b7}) and using (\ref{e:c1}),
(\ref{e:c4}), we get
$$(\nabla_{e_{i}}\mathcal{A})e_{j}=e_{i}(\lambda_{j})e_{j}+\sum_{k=1}^{n}\omega_{ij}^{k}e_{k}
(\lambda_{j}-\lambda_{k}).$$

Putting the value of $(\nabla_{e_{i}}\mathcal{A})e_{j}$ in
(\ref{e:b6}), we find
$$e_{i}(\lambda_{j})e_{j}+\sum_{k=1}^{n}\omega_{ij}^{k}e_{k}
(\lambda_{j}-\lambda_{k})=e_{j}(\lambda_{i})e_{i}+\sum_{k=1}^{n}\omega_{ji}^{k}e_{k}
(\lambda_{i}-\lambda_{k}).$$

Using $i\neq j=k$ and $i\neq j \neq k$ in the above equation, we
obtain
\begin{equation}\label{e:c6}
e_{i}(\lambda_{j})=
(\lambda_{i}-\lambda_{j})\omega_{ji}^{j}=(\lambda_{j}-\lambda_{i})\omega_{jj}^{i},
\end{equation}
and
\begin{equation}\label{e:c7}
(\lambda_{i}-\lambda_{j})\omega_{ki}^{j}=
(\lambda_{k}-\lambda_{j})\omega_{ik}^{j},
\end{equation}
respectively,  for $i, j, k \in A$.

Now, we have

\begin{lem}\label{l:1}
 Let $M^n$ be a biconservative hypersurface  in
  a space form $\overline M^{n+1}(c)$ having the shape operator given by
$(\ref{e:c1})$ with respect to a suitable orthonormal frame
$\{e_{1}, e_{2},\dots, e_{n}\}$. Then,
\begin{equation}\label{e:k4}
\lambda_1\neq\lambda_j,\ \ \mbox{for all}
\ \ j\in B.
\end{equation}
\end{lem}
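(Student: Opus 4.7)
The plan is to argue by contradiction, using the biconservativity condition to identify $\lambda_1$ explicitly and then differentiate along $e_1$.

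First, I would observe that equation (\ref{e:b9}) combined with the choice of $e_1$ gives the key identification $\lambda_1 = -\tfrac{nH}{2}$. Indeed, $2\mathcal{A}(\grad H) + nH \grad H = 0$ means $\grad H$ is an eigenvector of $\mathcal{A}$ with eigenvalue $-\tfrac{nH}{2}$; since $e_1$ is parallel to $\grad H$, this eigenvalue is precisely $\lambda_1$.

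Next, suppose toward a contradiction that $\lambda_1 = \lambda_j$ for some $j \in B$. Then $\lambda_j = -\tfrac{nH}{2}$ as well, so applying $e_1$ gives
\begin{equation*}
e_1(\lambda_j) \;=\; -\tfrac{n}{2}\, e_1(H) \;\neq\; 0,
\end{equation*}
where non-vanishing follows from (\ref{e:c3}). On the other hand, applying the Codazzi identity (\ref{e:c6}) with $i=1$ (and $j \in B$, so $i \neq j$) yields
\begin{equation*}
e_1(\lambda_j) \;=\; (\lambda_1 - \lambda_j)\,\omega_{j1}^{j} \;=\; 0,
\end{equation*}
which is a direct contradiction. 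Therefore $\lambda_1 \neq \lambda_j$ for every $j \in B$.

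There is no real obstacle here: the statement is almost a tautology once one notices that the biconservativity equation pins down $\lambda_1$ as a specific function of $H$, so that $\lambda_1$ cannot agree with any eigenvalue that is forced (by the Codazzi identity) to have vanishing $e_1$-derivative. The only care needed is to invoke the correct instance of (\ref{e:c6}), namely with $i=1$ and the roles of indices matching the convention $e_i(\lambda_j) = (\lambda_i - \lambda_j)\omega_{ji}^j$.
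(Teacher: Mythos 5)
Your proof is correct and follows essentially the same route as the paper: assume $\lambda_1=\lambda_j$, use the Codazzi relation (\ref{e:c6}) with $i=1$ to get $e_1(\lambda_j)=(\lambda_1-\lambda_j)\omega_{j1}^{j}=0$, and contrast this with $e_1(\lambda_j)=-\tfrac{n}{2}e_1(H)\neq0$ from $\lambda_1=-\tfrac{nH}{2}$ and (\ref{e:c3}). Your version is slightly more explicit about which instance of (\ref{e:c6}) is invoked, but the argument is identical in substance.
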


\begin{proof} Assume that  $\lambda_{j}= \lambda_{1}$ for some $j\neq 1$. Then, from (\ref{e:c6}) we get that
$$e_1(\lambda_j)=0, \quad\mbox{or}\quad e_1(H)=0, \quad\mbox{since}\quad
\lambda_1=-\frac{nH}{2},$$ which contradicts (\ref{e:c3}), and this completes the proof.
\end{proof}

 Therefore, in view of the Lemma 3.1,
$\lambda_1= -\frac{n}{2} H$ has multiplicity one.  Since $M$ has
four distinct principal curvatures, we can assume that
$\lambda_1$, $\lambda_u, \lambda_v$ and $\lambda_w$
are the four distinct principal curvatures of the hypersurface $M$ with
multiplicities $1$, $p, q$ and $r$ respectively, such that
$$\lambda_2=\lambda_3=\dots=\lambda_{p+1}=\lambda_u, \ u\in C_1,$$
$$\lambda_{p+2}=\lambda_{p+3}=\dots=\lambda_{p+q+1}=\lambda_v, \ v \in C_2,$$
$$\lambda_{p+q+2}=\lambda_{p+q+3}=\dots=\lambda_{p+q+r+1}=\lambda_w,\ w\in C_3.$$
Here $p+q+r+1=n$ and $C_1, C_2$ and $C_3$ denote the sets
$$C_1=\{2, 3,\dots, p+1\}, \quad C_2=\{p+2, p+3,\dots, p+q+1\},\quad C_3=\{p+q+2, p+q+3,\dots, n\}.$$

 Using (\ref{e:b4}) and (\ref{e:c1}), we obtain that
 \begin{equation}\label{e:c11}
\sum_{j=2}^n\lambda_j= p\lambda_u+
q\lambda_v+r\lambda_w=\frac{3n}{2}H=-3\lambda_1,\ u\in C_1, \ v\in C_2, \ w\in C_3.
\end{equation}

Next, we have

\begin{lem}\label{l:2}
 Let $M^n$ be a biconservative hypersurface with four distinct principal curvatures in
  a space form $\overline M^{n+1}(c)$ having the shape operator given by
$(\ref{e:c1})$ with respect to a suitable orthonormal frame
$\{e_{1}, e_{2},\dots, e_{n}\}$. Then the following relations are satisfied:
\begin{eqnarray} \nabla_{e_{1}}e_{1}=0, \quad \nabla_{e_{1}}e_{i}=\sum_{ C_{s}}\omega_{1i}^me_{m}\ \ \mbox{for all}\ \  i \in
C_{s}, \  s\in\{1,2,3\}, \quad m\neq i,\nonumber\end{eqnarray}
 \begin{eqnarray}
\nabla_{e_{i}}e_{1}= -\omega_{ii}^{1}e_{i} \ \ \mbox{for all}\ \  i \in B, \quad \nabla_{e_{i}}e_{i}=\sum_{m=1}^n\omega_{ii}^{m}e_{m}
\ \ \mbox{for all}\ \  i \in B, \quad m\neq i,\nonumber \end{eqnarray}
\begin{eqnarray}\nabla_{e_{i}}e_{j}=\sum_{ C_s}\omega_{ij}^{m}e_{m} \ \ \mbox{for all}\ \  i, j \in C_s, \ s\in\{1,2,3\},\quad i\neq j, \quad j\neq m, \nonumber \end{eqnarray}
\begin{eqnarray}
\nabla_{e_{i}}e_{j}=\omega_{ij}^i e_i+\sum_{B\setminus C_s}\omega_{ij}^{m}e_{m} \ \ \mbox{for all}\ \  i \in C_s, \quad j \in
B\setminus C_s, \ s\in\{1,2,3\}, \quad m\neq j,\nonumber \end{eqnarray}
where $\sum_{C_s}$ and $\sum_{B\setminus C_s}$ denote
the summation taken over the corresponding $C_s$ and $B\setminus C_s $, respectively  for  $s\in \{1, 2,
3\},$ and $\omega_{ij}^{i}$ satisfy $(\ref{e:c5})$ and $(\ref{e:c6})$.\\
\end{lem}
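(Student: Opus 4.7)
The plan is to read off every connection coefficient $\omega_{ij}^k := \langle \nabla_{e_i}e_j, e_k\rangle$ from the ingredients already assembled: the metric-compatibility identities $(\ref{e:c5})$, the Codazzi consequences $(\ref{e:c6})$--$(\ref{e:c7})$, Lemma~\ref{l:1} (so $\lambda_1 \neq \lambda_j$ for $j\in B$), together with one extra piece of information coming from the biconservative hypothesis, namely $e_1(H) \neq 0$ while $e_j(H) = 0$ for $j\in B$.

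The first formula, $\nabla_{e_1}e_1 = 0$, is immediate: $(\ref{e:c5})$ kills $\omega_{11}^1$, and for $i\in B$ substitution of $j = 1$ into $(\ref{e:c6})$ gives $e_i(\lambda_1) = (\lambda_1-\lambda_i)\omega_{11}^i$; the left-hand side vanishes because $\lambda_1 = -nH/2$ and $e_i(H)=0$ for $i\in B$, while Lemma~\ref{l:1} keeps the prefactor nonzero, forcing $\omega_{11}^i = 0$.

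The main workhorse for the remaining formulas is to combine $(\ref{e:c7})$ applied to the three orderings of an arbitrary triple $\{a,b,c\}$ of pairwise distinct indices with $(\ref{e:c5})$, producing the chain
$$\omega_{ab}^c(\lambda_b-\lambda_c) = \omega_{ba}^c(\lambda_a-\lambda_c) = \omega_{ca}^b(\lambda_a-\lambda_b).$$
Whenever two of $\lambda_a,\lambda_b,\lambda_c$ coincide this chain forces direct vanishings. Specialising $\{a,b,c\}$ to the cases (i) $a,b \in C_s$ and $c = 1$, (ii) $a,b\in C_s$ and $c\in C_t$ with $t\neq s$, and (iii) $a,c \in C_s$ and $b\notin C_s$ (including $b=1$), one gets in each case that two of the three symbols in the chain vanish. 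These three specialisations cover every vanishing demanded by the lemma with one exception: the triple $\{1,i,m\}$ in which $i \in C_s$, $m\in C_t$, $t\neq s$, so that all three eigenvalues are distinct and the chain yields only a proportionality.

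This surviving case is the real obstacle and is dispatched by a Lie-bracket argument exploiting the biconservative gradient structure. For $i,m\in B$ one has on the one hand
$$[e_i,e_m](H) = e_i(e_m H) - e_m(e_i H) = 0,$$
and on the other hand $[e_i,e_m](H) = (\nabla_{e_i}e_m - \nabla_{e_m}e_i)(H)$, in which only the $e_1$-component contributes because $e_k(H)=0$ for $k\in B$. This forces $(\omega_{im}^1 - \omega_{mi}^1)\,e_1(H)=0$; dividing by $e_1(H)\neq 0$ and using $(\ref{e:c5})$ gives $\omega_{i1}^m = \omega_{m1}^i$. Feeding this equality back into the Codazzi proportionality $\omega_{i1}^m(\lambda_1-\lambda_m) = \omega_{m1}^i(\lambda_1-\lambda_i)$ and cancelling forces $\omega_{i1}^m = 0$, and the chain then propagates the vanishing to $\omega_{1i}^m$ (and, via $(\ref{e:c5})$, to $\omega_{im}^1$). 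Assembling all these vanishings in the expansion $\nabla_{e_i}e_j = \sum_k \omega_{ij}^k e_k$, and using $(\ref{e:c5})$ to identify $\omega_{i1}^i = -\omega_{ii}^1$, yields the five formulas of Lemma~\ref{l:2}. The hardest step, as flagged, is the Lie-bracket computation in the three-distinct-eigenvalue triple $\{1,i,m\}$: it is precisely the biconservative data $e_1(H)\neq 0$ and $e_j(H) = 0$ that Codazzi alone cannot supply.
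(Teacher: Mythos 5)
Your proposal is correct and follows essentially the same route as the paper: the paper likewise combines the Codazzi consequences (\ref{e:c6})--(\ref{e:c7}) with the skew-symmetry (\ref{e:c5}) to kill the coefficients in triples with a repeated eigenvalue, and handles the one genuinely three-eigenvalue triple $\{1,i,m\}$ by exactly your Lie-bracket identity $[e_i,e_m](H)=0$, which yields $\omega_{im}^{1}=\omega_{mi}^{1}$ and, fed into the Codazzi proportionality, forces $\omega_{im}^{1}=0$ (equations (\ref{e:c9}), (\ref{e:b19}), (\ref{e:b20}) of the paper). Your packaging of the three specializations of (\ref{e:c7}) into a single cyclic chain is only an organizational difference, not a different proof.
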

\begin{proof} Using (\ref{e:c3}), (\ref{e:c4}) and the fact
that $[e_{i} \hspace{.1 cm}
e_{j}](H)=0=\nabla_{e_{i}}e_{j}(H)-\nabla_{e_{j}}e_{i}(H)=\omega_{ij}^{1}e_{1}(H)-\omega_{ji}^{1}e_{1}(H),$
for $i\neq j$, we find that
\begin{equation}\label{e:c9}
\omega_{ij}^{1}=\omega_{ji}^{1},  \quad i, j \in B.
\end{equation}

 Putting $i\neq 1, j = 1$ in (\ref{e:c6}) and using (\ref{e:c5}) and (\ref{e:c3}), we
 find
\begin{equation}\label{e:c12}
\omega_{1i}^{1}= 0, \quad  i\in A.
\end{equation}

 Putting $i = 1$  in (\ref{e:c7}), we
 obtain
 \begin{equation}\begin{array}{lcl}\label{e:b17}
\omega_{k1}^{j}= 0, \quad j\neq k \quad  \mbox{and}\quad j, k \in C_{s}, \ s\in \{1, 2, 3\}.
\end{array}\end{equation}

 Taking $i \in C_s, \ s\in \{1, 2, 3\}$ in (\ref{e:c7}), we
 have
\begin{equation}\begin{array}{lcl}\label{e:b18}
\omega_{ki}^{j}= 0, \quad j\neq k\quad\mbox{and}\quad j, k \in
B\setminus C_s.
\end{array}\end{equation}

 Putting  $j = 1$ in (\ref{e:c7}) and using (\ref{e:c9}), we
 get
\begin{equation}\begin{array}{lcl}\label{e:b19}
\omega_{ki}^{1}=\omega_{ik}^{1}=0, \quad i \in C_s,\quad k \in
B\setminus C_s,  \ s\in \{ 1, 2, 3\}.
\end{array}\end{equation}

 Putting $i = 1$ in (\ref{e:c7}) and
 using (\ref{e:b19}) and  (\ref{e:c5}), we  find
\begin{equation}\label{e:b20}
\omega_{1k}^{j}=\omega_{k1}^{j} = 0, \quad j \in C_s, \quad k
\in B\setminus C_s,  \ s\in \{ 1, 2, 3\}.
\end{equation}

 Combining (\ref{e:b19}) and (\ref{e:b17}), we obtain
\begin{equation}\begin{array}{lcl}\label{e:ba5}
\omega_{ji}^{1}=\omega_{ij}^{1}=0, \quad i\neq j,\quad i, j \in B.
\end{array}\end{equation}

 Now, using  (\ref{e:c12})$\sim$(\ref{e:ba5}) in (\ref{e:c4}),  we
complete the proof of the lemma.
\end{proof}

 We now evaluate $g(R(e_{1},e_{i})e_{1},e_{i}),
 \quad g(R(e_{1},e_{i})e_{i},e_{j})$ and $g(R(e_{i},e_{j})e_{i},e_{1})$ using Lemma \ref{l:2}, (\ref{e:b5}) and (\ref{e:c1}), and find the
following relations:
\begin{equation}\label{e:c14}
 e_{1}(\omega_{ii}^{1})- (\omega_{ii}^{1})^{2}= c+\lambda_{1} \lambda_{i}, \quad i \in
 B,
\end{equation}
\begin{equation}\label{e:c15}
 e_{1}(\omega_{ii}^{j})- \omega_{ii}^{j} \omega_{ii}^{1}= 0, \quad i\in C_s, \quad j \in B\setminus
 C_{s}, \quad s\in\{1, 2, 3\},
\end{equation}
and
 \begin{equation}\label{e:c16}
 e_{j}(\omega_{ii}^{1})+ \omega_{ii}^{j} \omega_{jj}^{1}-\omega_{ii}^{j} \omega_{ii}^{1}= 0, \quad i\in C_{s}, \quad j \in B\setminus
 C_{s}, \quad s\in\{1, 2, 3\},
\end{equation}
respectively.
Also, using (\ref{e:c3}), Lemma \ref{l:2}, and the fact that $[e_{i}\hspace{.1 cm}e_{1}](H)=0= \nabla_{e_{i}}e_{1}(H)-\nabla_{e_{1}}e_{i}(H),$  we find that
\begin{equation}\label{e:c17}
 e_{i}e_{1}(H)= 0, \ \ \mbox{for all}\ \  i\in B.
\end{equation}

\section{\textbf{Biconservative hypersurfaces in
 space forms with constant norm of second fundamental form}}

In this section we study biconservative hypersurfaces $M^n$ in
space forms $\overline M^{n+1}(c)$ with constant norm of second
fundamental form.
 We denote by $\beta$ the squared norm of the second
  fundamental form $h$. Then, using (\ref{e:c1}) we find that
\begin{equation}\label{e:g1}
\beta= \lambda_1^{2}+p\lambda_u^{2}+ q\lambda_v^{2}+r \lambda_w^2, \ u\in C_1, \ v\in C_2, \ w\in C_3.
\end{equation}

 We recall that $B=\{2,3,\dots,n\}$.  Then we  have the following:
\begin{lem}\label{l:3}
Let $M^n$ be a biconservative hypersurface with four distinct
principal curvatures in
  a space form $\overline M^{n+1}(c)$, having the shape operator given by
$(\ref{e:c1})$ with respect to a suitable orthonormal frame
$\{e_{1}, e_{2},\dots, e_{n}\}$.
If the second fundamental form is of constant norm, then
$$\omega^u_{vv}=0, \quad\mbox{for}\  u \in C_s,  \ \mbox{and}\  v\in B\setminus C_s, \ s\in \{1,2,3\}.$$
\end{lem}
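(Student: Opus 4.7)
The strategy is to combine the two global scalar identities available on $M$ — the constancy of $\beta$ and the trace relation \eqref{e:c11} — with the Codazzi-based derivative formulas \eqref{e:c6}, viewing the sought-after connection coefficients as eigenvalue derivatives. Indeed, for $i\neq j$, \eqref{e:c6} rewrites as $\omega^i_{jj}=e_i(\lambda_j)/(\lambda_j-\lambda_i)$, so the lemma is equivalent to showing that $e_i(\lambda_j)=0$ whenever $i$ and $j$ lie in different classes of $B$.

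First I would record the vanishings that follow from \eqref{e:c3} and \eqref{e:c6} alone. Since $\lambda_1=-\tfrac{n}{2}H$, \eqref{e:c3} gives $e_i(\lambda_1)=0$ for $i\in B$. Next, whenever a class $C_s$ has multiplicity $\geq 2$, picking two distinct indices $i,i'\in C_s$ and applying \eqref{e:c6} to the common eigenvalue $\lambda_i=\lambda_{i'}$ yields $e_i(\lambda_{i'})=0$; in other words $e_i$ annihilates the common eigenvalue of its own class. Finally, for $i\in C_s$ and $j\in C_t$ with $s\neq t$, \eqref{e:c6} gives $e_i(\lambda_j)=(\lambda_j-\lambda_i)\,\omega^i_{jj}$, the left-hand side being independent of the chosen representative of $C_t$.

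I would then apply $e_i$ with $i\in C_1$ (the cases $i\in C_2,C_3$ being symmetric) to the two identities $\beta=\lambda_1^2+p\lambda_u^2+q\lambda_v^2+r\lambda_w^2$ and $p\lambda_u+q\lambda_v+r\lambda_w=-3\lambda_1$. Assuming $p\geq 2$ so that $e_i(\lambda_u)=0$, both derivatives collapse to the homogeneous linear system
\[
q\lambda_v\,e_i(\lambda_v)+r\lambda_w\,e_i(\lambda_w)=0,\qquad q\,e_i(\lambda_v)+r\,e_i(\lambda_w)=0,
\]
whose determinant $qr(\lambda_w-\lambda_v)$ is nonzero since the four principal curvatures are pairwise distinct. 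Hence $e_i(\lambda_v)=e_i(\lambda_w)=0$, and dividing by $\lambda_v-\lambda_i$ and $\lambda_w-\lambda_i$ yields $\omega^i_{vv}=\omega^i_{ww}=0$ for every $v\in C_2$, $w\in C_3$, which is the lemma for $s=1$. Cyclic relabeling handles $s=2$ and $s=3$.

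The main obstacle is the assumption $|C_s|\geq 2$ used to force $e_i$ to annihilate the common eigenvalue of its own class; this is precisely what makes the $2\times 2$ system square. If some $C_s$ is a singleton, the corresponding derivative $e_i(\lambda_i)$ is unknown, the system becomes underdetermined for that $s$, and one must supplement it with an independent relation. I would try to obtain such a relation from the vanishings already secured from the classes with multiplicity $\geq 2$, fed into the structural equations \eqref{e:c14}--\eqref{e:c16}; in particular the evolution \eqref{e:c15} along $e_1$ propagates zeros of $\omega^j_{ii}$, and combined with \eqref{e:c16} should allow the transfer of information into the singleton class.
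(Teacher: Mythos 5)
Your reduction of the lemma to $e_u(\lambda_v)=0$ via \eqref{e:c6} is correct, and your key observation — that for $i\neq i'$ in the same class $C_s$ equation \eqref{e:c6} forces $e_i(\lambda_{i'})=(\lambda_i-\lambda_{i'})\omega^{i'}_{i'i}=0$, so that $e_u(\lambda_u)=0$ whenever $|C_s|\ge 2$ — does give a clean and much shorter proof in that case: the differentiated identities \eqref{e:c11} and \eqref{e:g1} then form a square homogeneous system in $e_u(\lambda_v),e_u(\lambda_w)$ with determinant $qr(\lambda_w-\lambda_v)\neq0$. However, there is a genuine gap in the case you flag yourself: when $C_s$ is a singleton ($p=1$, which the theorem must allow, e.g. $n=4$ with all multiplicities one), $e_u(\lambda_u)$ is an unknown, elimination leaves only the single relation $q(\lambda_v-\lambda_u)e_u(\lambda_v)+r(\lambda_w-\lambda_u)e_u(\lambda_w)=0$ (equation \eqref{e:d3} of the paper), and your argument stops. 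Your proposed repair — feeding the vanishings from the higher-multiplicity classes into \eqref{e:c14}--\eqref{e:c16} — is not carried out and does not obviously work: \eqref{e:c15} is a linear ODE for $\omega^j_{ii}$ along $e_1$, so it can only propagate a vanishing of $\omega^j_{ii}$ that you already have, and knowing $\omega^v_{uu}=0$ for $v$ in a class of multiplicity $\ge 2$ gives no direct control on $\omega^u_{vv}$ (the two are related by \eqref{e:c7} only through a third index, and by no pointwise algebraic identity).

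This singleton case is exactly where the paper's proof does its heavy lifting: starting from \eqref{e:d4} it assumes $\omega^u_{vv}\neq0$, differentiates along $e_1$ using \eqref{e:c15}, forces a $2\times2$ determinant to vanish, introduces the auxiliary quantity $a_1$, splits into the cases $a_1=0$ and $a_1\neq0$, and in the latter runs a long elimination (equations \eqref{e:d10}--\eqref{e:z1}) using the constancy of $\beta$ a second time to finally conclude $e_u(\lambda_u)=0$ in \eqref{e:n4} \emph{without} any multiplicity hypothesis; only then does the determinant of \eqref{e:d4} and \eqref{e:d23} yield the contradiction \eqref{e:d24}. So your route is a valid and appreciably simpler alternative for classes of multiplicity at least two, but to be a proof of the lemma as stated you must either carry out the singleton case along the paper's lines or supply a concrete substitute for the missing relation $e_u(\lambda_u)=0$; as written, the argument is incomplete.
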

\begin{proof} 
 We will prove the case $u\in C_1$ and $v\in C_2$  and by similar arguments one can prove all other cases. Let $w\in C_3.$ 
Differentiating (\ref{e:c11}) and
(\ref{e:g1}) with respect to $e_{u}$ and using (\ref{e:c3}), we get
\begin{equation}\label{e:d1}
 pe_{u}(\lambda_{u})+qe_{u}(\lambda_{v})+re_{u}(\lambda_{w})=0,
\end{equation}
and
\begin{equation}\label{e:d2}
 p\lambda_{u}e_{u}(\lambda_{u})+q\lambda_{v}e_{u}(\lambda_{v})+r\lambda_{w}e_{u}(\lambda_{w})=0,
\end{equation}
respectively.

Eliminating $e_{u}(\lambda_{u})$ from (\ref{e:d1}) and (\ref{e:d2}),
we find
\begin{equation}\label{e:d3}
q(\lambda_{v}-\lambda_{u})e_{u}(\lambda_{v})+r(\lambda_{w}-\lambda_{u})e_{u}(\lambda_{w})=0.
\end{equation}

Putting the value of $e_{u}(\lambda_{v})$ and $e_{u}(\lambda_{w})$
from (\ref{e:c6}) in (\ref{e:d3}), we obtain
\begin{equation}\label{e:d4}
 r(\lambda_{w}-\lambda_{u})^{2}\omega_{ww}^{u}+q(\lambda_{v}-\lambda_{u})^{2}\omega_{vv}^{u}=0.
\end{equation}

Differentiating (\ref{e:d4}) with respect to $e_{1}$ and using
(\ref{e:c6}) and (\ref{e:c15}), we have
\begin{equation}\label{e:d5}
\begin{array}{rcl}
 r[2(\lambda_{u}-\lambda_{1})\omega_{uu}^{1}+(2\lambda_{1}+\lambda_{u}-3\lambda_{w})\omega_{ww}^{1}](\lambda_w-\lambda_u)\omega_{ww}^{u}
 \\+q[2(\lambda_{u}-\lambda_{1})\omega_{uu}^{1}+(2\lambda_{1}+\lambda_{u}-3\lambda_{v})\omega_{vv}^{1}](\lambda_v-\lambda_u)\omega_{vv}^{u}=0.
\end{array}
\end{equation}

\smallskip
 We assume that 
$\omega_{vv}^{u}\neq 0$ and we will end up to contradiction.   Then the value of
the determinant formed by the coefficients of $\omega_{vv}^{u}$ and
$\omega_{ww}^{u}$ of the system (\ref{e:d4}) and (\ref{e:d5}) will be zero.
Therefore, we find
\begin{equation}\label{e:d6}
\begin{array}{rcl}
 2(\lambda_{1}-\lambda_{u})(\lambda_{v}-\lambda_{w})\omega_{uu}^{1}+(2\lambda_{1}+\lambda_{u}-3\lambda_{w})(\lambda_{u}-\lambda_{v})\omega_{ww}^{1}
 \\-(2\lambda_{1}+\lambda_{u}-3\lambda_{v})(\lambda_{u}-\lambda_{w})\omega_{vv}^{1}=0.
\end{array}
\end{equation}

We set $a_{1}=(\lambda_{w}-\lambda_{u})\omega_{vv}^{1}+(\lambda_{u}-\lambda_{v})\omega_{ww}^{1}+(\lambda_{v}-\lambda_{w})\omega_{uu}^{1}$.
Eliminating $\omega_{uu}^{1}$ from (\ref{e:d6}) and $a_1$, we get
\begin{equation}\label{e:d7}
\begin{array}{rcl}
 2(\lambda_{1}-\lambda_{u})a_1 =
 3(\lambda_{u}-\lambda_{v})(\omega_{vv}^{1}-\omega_{ww}^{1})
 (\lambda_{u}-\lambda_{w}).
\end{array}
\end{equation}

Now, we consider two cases.

\textbf{(i)} $a_1=0.$ Then, from (\ref{e:d7}), we obtain
\begin{equation}\label{e:d8}
\begin{array}{rcl}
 \omega_{ww}^{1}= \omega_{vv}^{1}.
\end{array}
\end{equation}

Differentiating (\ref{e:d8}) with respect to $e_1$ and using
(\ref{e:c14}) and (\ref{e:d8}), we find $\lambda_w=\lambda_v$, a
contradiction to four distinct principal curvatures.

\textbf{(ii)}  $a_1\neq 0.$ Then, differentiating (\ref{e:d7})
with respect to $e_u$ and using (\ref{e:d1}) and (\ref{e:c3}), we
get
\begin{comment}
\begin{equation}\label{e:d9}
\begin{array}{rcl}
 2p(\lambda_{1}-\lambda_{u})e_u(a_1) +2a_1(qe_u(\lambda_{v})+re_u(\lambda_w))=
 3\Big((-2qe_u(\lambda_{v})-re_u(\lambda_w))\\(\omega_{vv}^{1}-\omega_{ww}^{1})
 (\lambda_{u}-\lambda_{w})+p(\lambda_{u}-\lambda_{v})(e_u(\omega_{vv}^{1})-e_u(\omega_{ww}^{1}))
 (\lambda_{u}-\lambda_{w})\\+(\lambda_{u}-\lambda_{v})(\omega_{vv}^{1}-\omega_{ww}^{1})
 (-qe_u(\lambda_{v})-2re_u(\lambda_w))\Big).
\end{array}
\end{equation}
\end{comment}

\begin{equation}\label{e:d9}
\begin{array}{rcl}
 2p(\lambda_{1}-\lambda_{u})e_u(a_1) +2a_1(qe_u(\lambda_{v})+re_u(\lambda_w))=
 3\Big((-(q+p)e_u(\lambda_{v})-re_u(\lambda_w))\\(\omega_{vv}^{1}-\omega_{ww}^{1})
 (\lambda_{u}-\lambda_{w})+p(\lambda_{u}-\lambda_{v})(e_u(\omega_{vv}^{1})-e_u(\omega_{ww}^{1}))
 (\lambda_{u}-\lambda_{w})\\+(\lambda_{u}-\lambda_{v})(\omega_{vv}^{1}-\omega_{ww}^{1})
 (-qe_u(\lambda_{v})-(p+r)e_u(\lambda_w))\Big).
\end{array}
\end{equation}

Now, using (\ref{e:c6}) and (\ref{e:c16}) in (\ref{e:d9}), we obtain
\begin{equation}\label{e:d10}
\begin{array}{rcl}f_1\omega_{vv}^u
+f_2\omega_{ww}^u=2 p(\lambda_{1} - \lambda_{u}) e_u(a_1),
\end{array}
\end{equation}
where \begin{eqnarray} f_1&=&(\lambda_{u} - \lambda_{v})
\Big(3\omega_{ww}^{1}(q\lambda_v-(p+2q)\lambda_u+(p+q)\lambda_w)+3\omega_{vv}^{1}\big(2(p+q)\lambda_u-q\lambda_v\nonumber\\ &-&(2p+q)\lambda_w)+p(\lambda_u-\lambda_w)\big)+ 3p\omega_{uu}^{1}(\lambda_w-\lambda_u)+2a_1q\Big),\nonumber\\
f_2&=&(\lambda_{u} -
\lambda_{w})\Big(3\omega_{ww}^{1}\big((2p+r)\lambda_v+r\lambda_w-2(p+r)\lambda_u\big)+3\omega_{vv}^{1}((p+2r)\lambda_u-(p+r)\lambda_v\nonumber\\&-&r\lambda_w)\big)+3p\omega_{uu}^{1}(\lambda_u-\lambda_v)+2a_1r\Big).\nonumber \end{eqnarray}

Differentiating $a_1$ with respect to $e_u$ and using (\ref{e:c6}),
(\ref{e:c16}) and (\ref{e:d1}), we find
\begin{equation}\label{e:d11}
\begin{array}{rcl}
f_3\omega_{vv}^u+f_4\omega_{ww}^u+p(\lambda_{v}-\lambda_{w})
e_u(\omega_{uu}^{1}) = pe_u(a_1),
\end{array}
\end{equation}
where \begin{eqnarray}f_3&=&p \omega_{uu}^{1} (\lambda_{v} - \lambda_{w}) +
\omega_{vv}^{1} (q(\lambda_{v}- \lambda_{u}) +p(\lambda_{w}  -
\lambda_{u}))+(p+q)(\lambda_{u} - \lambda_{v}) \omega_{ww}^{1},\nonumber\\
f_4&=&p\omega_{uu}^{1} (\lambda_{v} - \lambda_{w})  +
(p+r)\omega_{vv}^{1} ( \lambda_{w} - \lambda_{u})+\omega_{ww}^{1}(p(
\lambda_{u} - \lambda_{v}) -r( \lambda_{w}-\lambda_u)).\nonumber\end{eqnarray}

Differentiating (\ref{e:c11}) with respect to $e_1$ and using
(\ref{e:c6}), we find
\begin{equation}\label{e:d12}
\begin{array}{rcl}
 p\omega_{uu}^{1} (\lambda_{u} - \lambda_{1}) +q \omega_{vv}^{1} (
\lambda_{v} -\lambda_{1})+r(\lambda_{w} - \lambda_{1})
\omega_{ww}^{1}=\frac{3ne_1(H)}{2}.
\end{array}
\end{equation}

Differentiating (\ref{e:d12}) with respect to $e_u$ and using
(\ref{e:c3}), (\ref{e:c6}), (\ref{e:c16}), (\ref{e:c17}) and
(\ref{e:d1}), we find
\begin{equation}\label{e:d13}
\begin{array}{rcl}
p(\lambda_{u}-\lambda_{1}) e_u(\omega_{uu}^{1}) =qf_5\omega_{vv}^u+
rf_6\omega_{ww}^u,
\end{array}
\end{equation}
where \begin{eqnarray}f_5=(\omega_{vv}^{1}-\omega_{uu}^{1})(\lambda_{1} +
\lambda_{u} - 2 \lambda_{v}),\quad
f_6=(\omega_{ww}^{1}-\omega_{uu}^{1})(\lambda_{1} + \lambda_{u} - 2
\lambda_{w}).\nonumber \end{eqnarray}

Eliminating $e_u(\omega_{uu}^{1})$ from (\ref{e:d11}) and
(\ref{e:d13}), we get
\begin{equation}\label{e:d14}
\begin{array}{rcl}
\Big((\lambda_{u}-\lambda_{1})f_3+(\lambda_{v} -
\lambda_{w})qf_5\Big)\omega_{vv}^u+\Big((\lambda_{u}-\lambda_{1})f_4+(\lambda_{v}
- \lambda_{w})rf_6\Big)\omega_{ww}^u\\=(\lambda_{u}-\lambda_{1})p
e_u(a_1).
\end{array}
\end{equation}

Eliminating $e_u(a_1)$ from (\ref{e:d10}) and (\ref{e:d14}), we find
\begin{equation}\label{e:d15}
\begin{array}{lcl}
f_7\omega_{vv}^u+f_8\omega_{ww}^u=0,
\end{array}
\end{equation}
where \begin{eqnarray}f_7=f_1+2(\lambda_{u}-\lambda_{1})f_3+2(\lambda_{v} -
\lambda_{w})qf_5,
\quad f_8=f_2+2(\lambda_{u}-\lambda_{1})f_4+2(\lambda_{v} -
\lambda_{w})rf_6.\nonumber\end{eqnarray}

Now, we simplify $f_7$ and $f_8$. Eliminating $\omega_{uu}^1$ from
$f_7$ using $a_1$, we get
%\begin{equation}
\begin{eqnarray}\label{e:d16}
(\lambda_w-\lambda_v)f_7&=& \Big(3 p \lambda _u^2-q \lambda _v^2+2(3p+2q)
\lambda _v \lambda _w-3 (p+q) \lambda _w^2-2\lambda _u ((3 p+ q) \lambda
_v\nonumber \\ &-&q \lambda _w)\Big)  (\omega _{vv}^1-\omega _{ww}^1)(\lambda
_u-\lambda _v)+a_1\Big(3 p \lambda _u^2+2 (p+q) \lambda _1( \lambda _v-\lambda_w) \\&-&p\lambda_u(5\lambda_v+\lambda_w)+\lambda_v(-2q\lambda_v+(3p+2q)\lambda_w)\Big).\nonumber
\end{eqnarray}
%\end{equation}

Eliminating $a_1$ from (\ref{e:d16}) using (\ref{e:d7}), we obtain
\begin{equation}\label{e:d17}
\begin{array}{lcl}
(\lambda _u-\lambda _v)^2 (\omega _{vv}^1-\omega _{ww}^1)g_1=-2 f_7
(\lambda _1-\lambda _u) (\lambda _v-\lambda _w),
\end{array}
\end{equation}
where \begin{eqnarray}g_1&=&3 p \lambda _u^2+4 \lambda _u( q\lambda_v-(3p+q)\lambda _w )+2\lambda_1(3p\lambda_u +q \lambda
_v-(3 p+ q) \lambda _w)\nonumber\\&+&3\lambda _w ((3 p+2 q) \lambda _w-2 q \lambda _v)\nonumber\end{eqnarray}

Similarly, eliminating $\omega_{uu}^1$ from $f_8$ using $a_1$, we
get
\begin{eqnarray}\label{e:d18}
a_1\Big( 2 \lambda _1 (p+r) \left(\lambda _v-\lambda _w\right)+\lambda _w \left(2 r \lambda _w-(3 p+2 r) \lambda _v\right)-3 p \lambda _u^2+p \lambda _u \left(\lambda _v+5 \lambda _w\right)\Big)\nonumber\\+f_8 (\lambda _v-\lambda _w)=(\lambda
_u-\lambda _w)  (\omega _{vv}^1-\omega _{ww}^1)\Big(2 \lambda _u (r \lambda _v-(3 p+r) \lambda _w)\\-3 (p+r) \lambda _v^2+2 (3 p+2 r) \lambda _v \lambda _w+3 p \lambda _u^2-r \lambda _w^2\Big).\nonumber
\end{eqnarray}

Eliminating $a_1$ from (\ref{e:d18}) using (\ref{e:d7}), we obtain
\begin{equation}\label{e:d19}
\begin{array}{lcl}
2 f_8 (\lambda _1-\lambda _u) (\lambda _v-\lambda _w)=(\lambda
_u-\lambda _w)^2 (\omega _{vv}^1-\omega _{ww}^1)g_2,
\end{array}
\end{equation}
where \begin{eqnarray}g_2&=&-4 \lambda _u ((3 p+r) \lambda _v-r \lambda _w)+2 \lambda _1 (-(3 p+r) \lambda _v+3 p \lambda _u+r \lambda _w)+3 \lambda _v ((3 p+2 r) \lambda _v\nonumber\\&-&2 r \lambda _w)+3 p \lambda _u^2.\nonumber \end{eqnarray}

Eliminating $f_7$ and $f_8$ from (\ref{e:d15}) using (\ref{e:d17})
and (\ref{e:d19}), we obtain
\begin{equation}\label{e:d20}
\begin{array}{lcl}
g_1 \left(\lambda _u-\lambda _v\right)^2 \omega _{vv}^u-g_2
\left(\lambda _u-\lambda _w\right)^2 \omega _{ww}^u=0.
\end{array}
\end{equation}

The value of
the determinant formed by the coefficients of $\omega_{vv}^{u}$ and
$\omega_{ww}^{u}$ of the system (\ref{e:d4}) and (\ref{e:d20}) will be zero. Therefore,  we have
\begin{equation}\label{e:d21}
\begin{array}{lcl}
q g_2 +r g_1=0,
\end{array}
\end{equation}
which on substitution for $g_1, g_2$, gives
\begin{eqnarray}\label{e:n1}
p (q+r) \lambda _u^2+2 \lambda _1 p (q+r) \lambda _u+\lambda _w \left(-2 \lambda _1 p r-4 p r \lambda _u-4 q r \lambda _v\right)\\+\lambda _w^2 (3 p r+2 q r)-4 p q \lambda _u \lambda _v+3 p q \lambda _v^2-2 \lambda _1 p q \lambda _v+2 q r \lambda _v^2=0.\nonumber
\end{eqnarray}

 Now,  eliminating $\lambda_w$ from (\ref{e:g1}) and (\ref{e:n1})
using (\ref{e:c11}), we find
\begin{equation}\label{e:d22}
\begin{array}{lcl}
-r \beta +(9+r) \lambda _1^2+6 p \lambda _1 \lambda _u+(p^2 +p r)
\lambda _u^2+2q(3 \lambda _1+ p \lambda _u) \lambda _v+(q^2+q r)
\lambda _v^2=0,
\end{array}
\end{equation}
and
\begin{equation}\label{e:n2}
\begin{array}{lcl}
b_0+b_1 \lambda _v+b_2 \lambda _v^2=0,
\end{array}
\end{equation}
where \begin{eqnarray}b_0&=&p \lambda _u^2 \left(3 p^2+2 p (q+2 r)+r (q+r)\right)+3 \lambda _1^2 (p (2 r+9)+6 q)\nonumber\\&+&2 \lambda _1 p \lambda _u (p (r+9)+(r+6) (q+r)),\nonumber\\
b_1&=&6 \lambda _1 q (3 p+2 (q+r))+2 p q \lambda _u (3 p+2 (q+r)),\nonumber\\
b_2&=&q (q+r) (3 p+2 (q+r)).\nonumber \end{eqnarray}

  Eliminating $\lambda_v$ from (\ref{e:n2}) 
using (\ref{e:d22}), we obtain
\begin{eqnarray}\label{e:z1}
3 \beta  p+\lambda _1^2 (3 p-2 (q+r+9))+2 \lambda _1 p \lambda _u (p+q+r)-p \lambda _u^2 (p+q+r)\\+2 \beta  (q+r)=0.\nonumber\end{eqnarray}
\begin{comment}
  Equating to zero the resultant of the polynomial equations
(\ref{e:d22}) and (\ref{e:n2}) with respect $\lambda _v$, we find the
following polynomial equation in terms of $\lambda _1, \lambda _u$:
\begin{eqnarray}\label{e:n3}
%\begin{array}{lcl}
F(\lambda _1, \lambda _u)=\sum_{k=0}^{8}c_k \lambda _u^k=0,
\end{eqnarray}
%\end{equation}
where the coefficients $c_k$'s ($k=0,1,\dots,8$) of $\lambda_u^k$ are functions of $\lambda_1$ and \begin{eqnarray}F&=&\beta^4 q^4 r^4 (q+r)^2 (3 p+2 q+2r)^2-81 p^2 q^6 r^3 \beta ^3 \lambda _1^2-108 p q^7 r^3 \beta ^3\lambda _1^2
\nonumber\\&-&36 p^2 q^7 r^3 \beta ^3\lambda _1^2 -36 q^8 r^3 \beta
^3 \lambda _1^2+12 p q^8 r^3 \beta ^3 \lambda _1^2-4 p^2 q^8 r^3
\beta ^3 \lambda _1^2+24 q^9 r^3 \beta ^3 \lambda _1^2
+\langle\langle 2624\rangle\rangle \nonumber\\&+&[p^2 q^3 r^3 (q + r)^2 (p +
q + r)^3 (9 p^3 - 6 p^2 (q + r) +
   25 q r (q + r) + p (q^2 - 37 q r + r^2))
]\lambda _u^8.\nonumber\end{eqnarray}
\end{comment}

 Differentiating (\ref{e:z1}) with respect to $e_u$ and using (\ref{e:c3}), we get
\begin{equation}\label{e:n4}
2p(p+q+r)( \lambda _1- \lambda _u )e_u(\lambda_u)=0, \end{equation}
whereby, we get $e_u(\lambda_u)=0$.
Therefore, from (\ref{e:d1}) and (\ref{e:c6}), we obtain
\begin{equation}\label{e:d23}
\begin{array}{lcl}
q(\lambda_{v} - \lambda_{u})\omega_{vv}^u+r(\lambda_{w}
-\lambda_{u})\omega_{ww}^u=0.
\end{array}
\end{equation}

The value of
the determinant formed by the coefficients of $\omega_{vv}^{u}$ and
$\omega_{ww}^{u}$ of the system (\ref{e:d4}) and (\ref{e:d23}) will be zero. Therefore,  we get
\begin{equation}\label{e:d24}
\begin{array}{lcl}
(\lambda_{v} - \lambda_{u})(\lambda_{w} - \lambda_{u})(\lambda_{v} -
\lambda_{w})=0,
\end{array}
\end{equation}
which gives a contradiction to four distinct principal curvatures.
Hence, we obtain that $\omega_{vv}^u=0$ and the 
 proof of the lemma
%\ref{l:3} 
is completed.
\end{proof}

 Next, we have:
\begin{lem}{\rm \label{l:4}}
Under the assumtions of Lemma \ref{l:3},
 let 
 $a_{1}=(\lambda_{w}-\lambda_{u})\omega_{vv}^{1}+(\lambda_{u}-\lambda_{v})\omega_{ww}^{1}+(\lambda_{v}-\lambda_{w})\omega_{uu}^{1}.$  Then,
 
 \smallskip
\noindent
{\rm \textbf{(a)}} If $a_1\neq 0$, it is
\begin{equation}\label{e:d43}\omega_{wv}^{u}=\omega_{vw}^{u}=
\omega_{wu}^{v}=\omega_{uw}^{v}=\omega_{vu}^{w}=\omega_{uv}^{w}=0,
\quad \mbox{where}\ u\in C_1, v\in C_2, w\in C_3.
\end{equation} 
{\rm \textbf{(b)}} If $a_1=0$, we have that
\begin{equation}\label{e:d44} \omega_{ii}^{1}=\alpha\lambda_i+\phi,\quad e_1(\alpha)=\alpha\phi+\lambda_{1}(1+\alpha^2),\quad
e_1(\phi)=\phi^2+\alpha\lambda_1\phi+c,\end{equation} for some
smooth functions $\alpha$ and $\phi$,  and for $i\in B$.
\end{lem}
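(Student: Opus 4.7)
The plan is to treat the two cases by different methods, both building on the Gauss equation~(\ref{e:b5}) and the structure equations~(\ref{e:c5})--(\ref{e:c7}).

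For part (a), I would exploit the curvature identities produced by picking one basis vector from each of the three blocks. By the Gauss formula, the curvature operators
\[
R(e_u,e_v)e_w,\qquad R(e_v,e_w)e_u,\qquad R(e_u,e_w)e_v
\]
all vanish identically, since every term on the right-hand side of (\ref{e:b5}) involves an inner product of basis vectors from distinct blocks. Computing each of these intrinsically via $R(X,Y)Z=\nabla_X\nabla_Y Z-\nabla_Y\nabla_X Z-\nabla_{[X,Y]}Z$, and using Lemma~\ref{l:2}, Lemma~\ref{l:3}, and (\ref{e:c5})--(\ref{e:c7}) (which kill most cross-block connection coefficients and make the remaining ones expressible in terms of the three genuine unknowns $A=\omega_{wv}^u$, $B=\omega_{wu}^v$, $C=\omega_{vu}^w$, by (\ref{e:c7})), projecting onto the $e_u,e_v,e_w$ directions yields a $3\times 3$ linear system in $A,B,C$ whose coefficients involve only the $\lambda$'s and the $\omega_{ii}^1$'s. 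The key algebraic step is then to check that the determinant of this system is a nonzero multiple of $a_1$; then $a_1\neq 0$ forces $A=B=C=0$, and (\ref{e:c7}) supplies the three remaining vanishings in~(\ref{e:d43}).

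For part (b), note that $a_1$ is (up to sign) the determinant of the $3\times 3$ matrix whose rows are $(1,\lambda_i,\omega_{ii}^1)$ for $i\in\{u,v,w\}$, so $a_1=0$ together with the distinctness of $\lambda_u,\lambda_v,\lambda_w$ is equivalent to the three points $(\lambda_i,\omega_{ii}^1)$ lying on a common affine line $\omega=\alpha\lambda+\phi$. Concretely, set
\[
\alpha=\frac{\omega_{uu}^1-\omega_{vv}^1}{\lambda_u-\lambda_v},\qquad \phi=\omega_{uu}^1-\alpha\lambda_u,
\]
which are smooth functions on the open set under consideration and satisfy $\omega_{ii}^1=\alpha\lambda_i+\phi$ for each representative $i\in\{u,v,w\}$. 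The same equation then extends to every $i\in B$, since $\lambda_i$ and $\omega_{ii}^1$ depend only on the block of $i$ (the block-constancy of $\omega_{ii}^1$ being a consequence of (\ref{e:c14}), (\ref{e:c16}) and the vanishings from Lemma~\ref{l:3}). To obtain the ODEs in~(\ref{e:d44}), I would differentiate $\omega_{ii}^1=\alpha\lambda_i+\phi$ along $e_1$, substitute $e_1(\omega_{ii}^1)=(\omega_{ii}^1)^2+c+\lambda_1\lambda_i$ from~(\ref{e:c14}) and $e_1(\lambda_i)=(\lambda_i-\lambda_1)\omega_{ii}^1$ from~(\ref{e:c6}), and rearrange. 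The $\lambda_i^2$-term cancels automatically, leaving a linear equation in $\lambda_i$ that must hold at three distinct values; equating the slope and intercept to zero yields exactly
\[
e_1(\alpha)=\alpha\phi+\lambda_1(1+\alpha^2),\qquad e_1(\phi)=\phi^2+\alpha\lambda_1\phi+c.
\]

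The main obstacle is the bookkeeping in part (a): even after Lemmas~\ref{l:2} and~\ref{l:3}, the intrinsic expansion of $R(X,Y)Z$ still produces a large number of connection-coefficient terms, and one must carefully use (\ref{e:c7}) applied with indices inside a common block to kill the spurious ones before the system collapses to $3\times 3$. Verifying that the determinant of this reduced system factors as a nonzero quantity times $a_1$ is the technical heart of the argument; part (b), by contrast, is an essentially algebraic rearrangement once the collinearity observation is in hand.
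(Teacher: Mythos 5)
Your part (b) is correct and follows the paper's own route exactly: the collinearity interpretation of $a_1=0$ is precisely (\ref{e:d47})--(\ref{e:d48}), and differentiating $\omega_{ii}^1=\alpha\lambda_i+\phi$ along $e_1$, substituting (\ref{e:c6}) and (\ref{e:c14}), and matching the coefficients of $\lambda_i^0$ and $\lambda_i^1$ gives (\ref{e:d49}); the block-constancy of $\omega_{ii}^1$ that you need to pass from the representatives $u,v,w$ to all of $B$ already follows from (\ref{e:c6}) together with Lemma \ref{l:1}.

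Part (a), however, has a genuine gap. The components you propose to compute, $g(R(e_u,e_v)e_w,e_x)$ with $x\in\{u,v,w\}$, do not produce a linear system in $\omega_{wv}^u,\omega_{wu}^v,\omega_{vu}^w$ with coefficients in the $\lambda$'s and the $\omega_{ii}^1$'s: expanding, say, $g(\nabla_{e_u}\nabla_{e_v}e_w,e_u)$ produces the derivative term $e_u(\omega_{vw}^u)$ together with quadratic terms $\sum_m\omega_{vw}^m\omega_{uu}^m$, and these derivatives of the unknowns do not cancel against anything, so what you obtain is a differential relation rather than the algebraic $3\times 3$ system you describe. The component that works --- and the one the paper uses --- is the projection onto $e_1$: since $\nabla_{e_i}e_1=-\omega_{ii}^1e_i$ and all $\omega_{ij}^1$ with $i\neq j$ vanish by (\ref{e:ba5}), the identity $g(R(e_v,e_u)e_w,e_1)=0$ collapses to the single linear relation (\ref{e:d45}), namely $\omega_{vu}^{w}(\omega_{uu}^{1}-\omega_{ww}^{1})=\omega_{uv}^{w}(\omega_{vv}^{1}-\omega_{ww}^{1})$. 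Paired with the Codazzi relation $(\lambda_{u}-\lambda_{w})\omega_{vu}^{w}=(\lambda_{v}-\lambda_{w})\omega_{uv}^{w}$ from (\ref{e:c7}), this is a $2\times2$ homogeneous system whose determinant equals $\pm a_1$, so $a_1\neq 0$ forces $\omega_{vu}^w=\omega_{uv}^w=0$, and the remaining four vanishings follow from (\ref{e:c5}) and (\ref{e:c7}) as you indicate. In short, the ``key algebraic step'' you defer --- that the determinant is a nonzero multiple of $a_1$ --- is exactly the point that needs proof, and it only comes out that way for the $e_1$-projection, which your plan omits.
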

\begin{proof}  \textbf{(a)} Let $a_1\neq 0.$  Evaluating
$g(R(e_{v},e_{u})e_{w},e_{1})$ using (\ref{e:b5}), (\ref{e:c1}) and
Lemma \ref{l:2} and Lemma \ref{l:3}, we find
\begin{equation}\label{e:d45}
\omega_{vu}^{w}(\omega_{uu}^{1}-\omega_{ww}^{1})=\omega_{uv}^{w}(\omega_{vv}^{1}-\omega_{ww}^{1}).
\end{equation}

Putting $j=w, k=u, i=v$ in (\ref{e:c7}), we get
 \begin{equation}\label{e:d46}
(\lambda_{u}-\lambda_{w})\omega_{vu}^{w}=(\lambda_{v}-\lambda_{w})\omega_{uv}^{w}.
\end{equation}

The value of the determinant formed by the coefficients of
$\omega_{vu}^{w}$ and $\omega_{uv}^{w}$ in (\ref{e:d45}) and
(\ref{e:d46}) is $a_1\neq 0$, hence
$\omega_{vu}^{w}=0=\omega_{uv}^{w}$. Also, from (\ref{e:c5}), we get
$\omega_{vw}^{u}=-\omega_{vu}^{w}$ and
$\omega_{uv}^{w}=-\omega_{uw}^{v}$. Consequently, we obtain
$\omega_{vw}^{u}=0$, and $\omega_{uw}^{v}=0$, which together with
(\ref{e:c7}) gives $\omega_{wv}^{u}=0$, and $\omega_{wu}^{v}=0$.\\
\\
\textbf{(b)} Let $a_1=0.$ Then, we have
\begin{equation}\label{e:d47}
\frac{\omega_{uu}^{1}-\omega_{vv}^{1}}{\lambda_{u}-\lambda_{v}}=\frac{\omega_{ww}^{1}-\omega_{vv}^{1}}{\lambda_{w}-\lambda_{v}}
=\frac{\omega_{uu}^{1}-\omega_{ww}^{1}}{\lambda_{u}-\lambda_{w}}=\alpha,
\end{equation}
for some smooth function $\alpha$.

From (\ref{e:d47}), we get
\begin{equation}\label{e:d48}
\omega_{ii}^{1}=\alpha\lambda_{i}+\phi,
\end{equation}
for some smooth function $\phi$.

Differentiating (\ref{e:d48}) with respect to $e_1$ and using
(\ref{e:c6}), (\ref{e:c14}) and (\ref{e:d48}), we find
\begin{equation}\label{e:d49}
e_1(\alpha)=\alpha\phi+\lambda_{1}(1+\alpha^2),\quad
e_1(\phi)=\phi^2+\alpha\lambda_1\phi+c,
\end{equation}
whereby completing the proof of the lemma.\end{proof}

\section{ \textbf{Proof of
Theorem 1.1}}

Depending upon principal curvatures, we consider the following
cases.

\medskip
\noindent
 \textbf{ Case 1.} \emph{The case of four distinct principal
curvatures }

\smallskip
From (\ref{e:c7}) and
(\ref{e:c5}), we obtain
\begin{equation}\label{e:d55}
(\lambda_{u}-\lambda_{v})\omega_{wu}^{v}=(\lambda_{w}-\lambda_{v})\omega_{uw}^{v}=(\lambda_{u}-\lambda_{w})\omega_{vu}^{w}.
\end{equation}

From (\ref{e:d55}) and (\ref{e:c5}), we find
\begin{equation}\label{e:d56}
\omega_{vw}^{u}\omega_{wv}^{u}+\omega_{wu}^{v}\omega_{uw}^{v}+\omega_{vu}^{w}\omega_{uv}^{w}=0.
\end{equation}

Evaluating $g(R(e_{u},e_{v})e_{u},e_{v})$,
$g(R(e_{u},e_{w})e_{u},e_{w})$ and $g(R(e_{v},e_{w})e_{v},e_{w})$,
using (\ref{e:b5}), (\ref{e:c1}), (\ref{e:d56}) and Lemmas \ref{l:2} and 
\ref{l:3}, we find that
\begin{equation} \label{e:d50}
-\omega_{uu}^{1}\omega_{vv}^{1}
+\sum\nolimits_{k\in B\setminus\{C_1,C_2\}}2\omega_{uv}^{k}\omega_{vu}^{k}=c+
\lambda_{u} \lambda_{v},
\end{equation}
\begin{equation} \label{e:d51}
-\omega_{uu}^{1}\omega_{ww}^{1}
+\sum\nolimits_{k\in B\setminus\{C_1,C_3\}}2\omega_{uw}^{k}\omega_{wu}^{k}=c+
\lambda_{u} \lambda_{w},
\end{equation}
\begin{equation} \label{e:d52}
-\omega_{vv}^{1}\omega_{ww}^{1}
+\sum\nolimits_{k\in B\setminus\{C_2,C_3\}}2\omega_{vw}^{k}\omega_{wv}^{k}=c+
\lambda_{v} \lambda_{w},
\end{equation}
respectively.

Simplifying (\ref{e:d50}), (\ref{e:d51}) and (\ref{e:d52}), we obtain
 \begin{equation} \label{e:d50a}
-\omega_{uu}^{1}\omega_{vv}^{1}
+2 r\omega_{uv}^{w}\omega_{vu}^{w}=c+
\lambda_{u} \lambda_{v},
\end{equation}
\begin{equation} \label{e:d51a}
-\omega_{uu}^{1}\omega_{ww}^{1}
+2 q\omega_{uw}^{v}\omega_{wu}^{v}=c+
\lambda_{u} \lambda_{w},
\end{equation}
\begin{equation} \label{e:d52a}
-\omega_{vv}^{1}\omega_{ww}^{1}
+2 p\omega_{vw}^{u}\omega_{wv}^{u}=c+
\lambda_{v} \lambda_{w},
\end{equation}
respectively.

 Depending upon $a_1$, we consider the following cases.

\smallskip
\textbf{Subcase A.} Assume that $a_1\neq 0$. Using (\ref{e:d43}) in
(\ref{e:d50}), (\ref{e:d51}) and (\ref{e:d52}), we obtain
\begin{equation} \label{e:d53}
-\omega_{uu}^{1}\omega_{vv}^{1}=c+\lambda_{u} \lambda_{v},\quad
-\omega_{uu}^{1}\omega_{ww}^{1}=c+\lambda_{u}
\lambda_{w},\quad-\omega_{vv}^{1}\omega_{ww}^{1}=c+\lambda_{v}
\lambda_{w}.
\end{equation}

From (\ref{e:d53}), we get
\begin{equation}\label{e:d54}
-(c+\lambda_{w} \lambda_{v})(\omega_{uu}^{1})^{2}= (c+\lambda_{u}
\lambda_{v})(c+\lambda_{u} \lambda_{w}).
\end{equation}

Differentiating (\ref{e:c11}) and (\ref{e:g1}) with respect to $e_1$
and using (\ref{e:c6}), we find
\begin{equation}\label{e:n11}
p(\lambda_{u}- \lambda_{1})\omega_{uu}^{1}+q(\lambda_{v}-
\lambda_{1})\omega_{vv}^{1}+r(\lambda_{w}-
\lambda_{1})\omega_{ww}^{1}= -3e_1(\lambda_{1}),
\end{equation}
and
\begin{equation}\label{e:n12}
p\lambda_{u}(\lambda_{u}-
\lambda_{1})\omega_{uu}^{1}+q\lambda_{v}(\lambda_{v}-
\lambda_{1})\omega_{vv}^{1}+r\lambda_{w}(\lambda_{w}-
\lambda_{1})\omega_{ww}^{1}= -\lambda_{1}e_1(\lambda_{1}),
\end{equation}
respectively.

Eliminating $e_1(\lambda_1)$ from (\ref{e:n12}) using (\ref{e:n11}),
we obtain
\begin{equation}\label{e:n13}
p(3\lambda_{u}- \lambda_{1})(\lambda_{u}-
\lambda_{1})\omega_{uu}^{1}+q(3\lambda_{v}-
\lambda_{1})(\lambda_{v}-
\lambda_{1})\omega_{vv}^{1}+r(3\lambda_{w}-
\lambda_{1})(\lambda_{w}- \lambda_{1})\omega_{ww}^{1}=0.
\end{equation}

Multiplying (\ref{e:n13}) with $\omega_{uu}^{1}$ and using
(\ref{e:d53}), we find
\begin{eqnarray}\label{e:n14}%\begin{array}{rcl}
p(3\lambda_{u}- \lambda_{1})(\lambda_{u}-
\lambda_{1})(\omega_{uu}^{1})^2&=&q(3\lambda_{v}-
\lambda_{1})(\lambda_{v}-
\lambda_{1})(c+\lambda_u\lambda_v)\nonumber \\&+&r(3\lambda_{w}-
\lambda_{1})(\lambda_{w}- \lambda_{1})(c+\lambda_u\lambda_w).
%\end{array}
\end{eqnarray}

Eliminating $(\omega_{uu}^{1})^2$ from (\ref{e:n14}) using
(\ref{e:d54}), we obtain
\begin{equation}\label{e:n15}\begin{array}{rcl}
-p(3\lambda_{u}- \lambda_{1})(\lambda_{u}-
\lambda_{1})(c+\lambda_u\lambda_w)(c+\lambda_u\lambda_v)=\big(
q(3\lambda_{v}- \lambda_{1})(\lambda_{v}-
\lambda_{1})\\(c+\lambda_u\lambda_v)+r(3\lambda_{w}-
\lambda_{1})(\lambda_{w}-
\lambda_{1})(c+\lambda_u\lambda_w)\big)(c+\lambda_v\lambda_w).
\end{array}\end{equation}

Eliminating $\lambda_w$ from (\ref{e:n15}) using (\ref{e:c11}), we
obtain
\begin{equation}\label{e:n16}\begin{array}{rcl}
v_0+v_1\lambda_v+v_2\lambda_v^2+v_3\lambda_v^3+v_4\lambda_v^4+v_5\lambda_v^5=0,\end{array}\end{equation}
where
\begin{eqnarray}v_0&=&c r (\lambda _1^2 (c r (r (p+q+12)+r^2+27)-p ((p+12) r+r^2+81) \lambda _u^2)+9 \lambda _1 (2 c p r \lambda _u\nonumber\\&-&p (3 p+r) \lambda _u^3)+3 p (p+r) \lambda _u^2 (c r-p \lambda _u^2)-3 \lambda _1^3 ((p+12) r+r^2+27) \lambda _u), \nonumber \\ v_1&=&\lambda _1 (18 c^2 q r^2-c p r \lambda _u^2 (p (4 r+27)+4 q r+54 q+4 r^2)+p (4 p^2 (r+9)+4 p r^2-9 r^2) \lambda _u^4)\nonumber\\&+&3 p \lambda _u (2 c^2 q r^2-c r \lambda _u^2 (p^2+3 p q+r (q-r))+(p^3-p r^2) \lambda _u^4-3 \lambda _1^3 (c r ((q+12) r\nonumber\\&+&r^2+27)-p (r^2+36 r+108) \lambda _u^2)-\lambda _1^2 (c r \lambda _u (p (2 (q+12) r+81)+3 q (8 r+27))\nonumber\\&-&6 p (3 p (2 r+9)+2 r^2) \lambda _u^3)+9 \lambda _1^4 (r^2+12 r+27) \lambda _u,\nonumber \\
v_2&=&q (3 (c^2 r^2 (q+r)-3 c p r (p+q) \lambda _u^2+(4 p^3-p r^2) \lambda _u^4)-\lambda _1^2 (c r ((q+12) r+r^2+81)\nonumber\\&-&36 p (2 r+9) \lambda _u^2)-\lambda _1 (c r \lambda _u (p (4 r+54)+4 q r+27 q+4 r^2)-4 p (3 p (r+9)+r^2) \lambda _u^3)\nonumber\\&+&3 \lambda _1^3 (r^2+36 r+108) \lambda _u),\nonumber \\
v_3&=&q (-3 c r \lambda _u (3 p q+p r+q^2-r^2)+\lambda _1 (4 p (3 q (r+9)+r^2) \lambda _u^2-9 c r (3 q+r))+18 p^2 q \lambda _u^3\nonumber\\&+&6 \lambda _1^2 (3 q (2 r+9)+2 r^2) \lambda _u),\nonumber \\
v_4&=&q (\lambda _1 (4 q^2 (r+9)+4 q r^2-9 r^2) \lambda _u-3 (c q r (q+r)+p (r^2-4 q^2) \lambda _u^2)),\nonumber \\ v_5&=&3 q^2(q^2 - r^2 )\lambda _u.\nonumber \end{eqnarray}

Eliminating $\lambda_w$ from (\ref{e:g1}) using (\ref{e:c11}), we
obtain
\begin{equation}\label{e:n17}\begin{array}{rcl}
v_6+v_7\lambda_v+v_8\lambda_v^2=0,\end{array}\end{equation} where
\begin{eqnarray}v_6=-r \beta +9 \lambda _1^2+r \lambda _1^2+6 p \lambda _1 \lambda
_u+p^2 \lambda _u^2+p r \lambda _u^2,\quad v_7=6 q \lambda _1+2 p q
\lambda _u, \quad v_8=q^2+q r.\nonumber\end{eqnarray}

Equations (\ref{e:n16}) and (\ref{e:n17}) have a common root $\lambda _v$, so
their resultant with respect $\lambda_v$ vanish. Hence,
\begin{eqnarray}\label{e:n18}
v_5^2 v_6^5-v_4 v_5 v_6^4 v_7+v_3 v_5 v_6^3 v_7^2-v_2 v_5 v_6^2
v_7^3+v_1 v_5 v_6 v_7^4-v_0 v_5 v_7^5+v_4^2 v_6^4 v_8\nonumber\\-2 v_3 v_5
v_6^4 v_8-v_3 v_4 v_6^3 v_7 v_8+3 v_2 v_5 v_6^3 v_7 v_8+v_2 v_4
v_6^2 v_7^2 v_8-4 v_1 v_5 v_6^2 v_7^2 v_8\nonumber\\-v_1 v_4 v_6 v_7^3 v_8+5
v_0 v_5 v_6 v_7^3 v_8+v_0 v_4 v_7^4 v_8+v_3^2 v_6^3 v_8^2-2 v_2 v_4
v_6^3 v_8^2+2 v_1 v_5 v_6^3 v_8^2\nonumber\\-v_2 v_3 v_6^2 v_7 v_8^2+3 v_1
v_4 v_6^2 v_7 v_8^2-5 v_0 v_5 v_6^2 v_7 v_8^2+v_1 v_3 v_6 v_7^2
v_8^2-4 v_0 v_4 v_6 v_7^2 v_8^2\\-v_0 v_3 v_7^3 v_8^2+v_2^2 v_6^2
v_8^3-2 v_1 v_3 v_6^2 v_8^3+2 v_0 v_4 v_6^2 v_8^3-v_1 v_2 v_6 v_7
v_8^3+3 v_0 v_3 v_6 v_7 v_8^3\nonumber\\+v_0 v_2 v_7^2 v_8^3+v_1^2 v_6
v_8^4-2 v_0 v_2 v_6 v_8^4-v_0 v_1 v_7 v_8^4+v_0^2 v_8^5=0,\nonumber
%\end{array}
\end{eqnarray}
which is a polynomial equation 
\begin{eqnarray}
\label{e:n19} G(\lambda_1, \lambda_u)=0,
\end{eqnarray} 
for
 $\lambda_1, \lambda_u$.
 
 \begin{comment}
 of degree $12$, where
\begin{eqnarray}\label{e:n19}
%\begin{array}{rcl}
G(\lambda_1, \lambda_u)=q^3 r^4 (q+r) \Big(9 c^4 q^6 r^2 \beta ^2+36
c^4 q^5 r^3 \beta ^2+54 c^4 q^4 r^4 \beta ^2+36 c^4 q^3 r^5 \beta
^2\nonumber\\+9 c^4 q^2 r^6 \beta ^2-18 c^3 q^5 r^2 \beta ^3-54 c^3 q^4 r^3
\beta ^3-54 c^3 q^3 r^4 \beta ^3-18 c^3 q^2 r^5 \beta ^3\nonumber\\+9 c^2 q^4
r^2 \beta ^4+\langle\langle 3810\rangle\rangle +171 p^6 q^2 r^2
\lambda _u^{12}+72 p^5 q^3 r^2 \lambda _u^{12}+9 p^4 q^4 r^2 \lambda
_u^{12}\\+36 p^7 r^3 \lambda _u^{12}+90 p^6 q r^3 \lambda _u^{12}+72
p^5 q^2 r^3 \lambda _u^{12}+18 p^4 q^3 r^3 \lambda _u^{12}+9 p^6 r^4
\lambda _u^{12}\nonumber\\+18 p^5 q r^4 \lambda _u^{12}+9 p^4 q^2 r^4 \lambda
_u^{12}\Big)=0.\nonumber
%\end{array}
\end{eqnarray}
\end{comment}
Differentiating (\ref{e:n19}) with respect to $e_1$, we get
\begin{equation}\label{e:n20}\begin{array}{rcl}
G_1e_1(\lambda_1)+G_ue_1(\lambda_u)=0,\end{array}\end{equation}
where $G_1=\frac{\partial G}{\partial \lambda_1}, G_u=\frac{\partial
G}{\partial \lambda_u}$.

Eliminating $e_1(\lambda_1)$ from (\ref{e:n20}) using (\ref{e:n11})
and using (\ref{e:c6}), we find
\begin{equation}\label{e:n21}\begin{array}{rcl}
(3G_u-pG_1)(\lambda_u-\lambda_1)\omega_{uu}^1=\Big(q(\lambda_v-\lambda_1)\omega_{vv}^1+r(\lambda_w-\lambda_1)\omega_{ww}^1\Big)G_1.\end{array}\end{equation}

Multiplying (\ref{e:n21}) with $\omega_{uu}^{1}$ and using
(\ref{e:d53}) and (\ref{e:d54}), we obtain
\begin{equation}\label{e:n22}\begin{array}{rcl}
L(c+\lambda_u\lambda_v)(c+\lambda_u\lambda_w)
=\Big(q(\lambda_v-\lambda_1)(c+\lambda_u\lambda_v)+r(\lambda_w-\lambda_1)\\(c+\lambda_u\lambda_w)\Big)(c+\lambda_w\lambda_v),\end{array}\end{equation}
where $L=\frac{(3G_u-pG_1)(\lambda_u-\lambda_1)}{G_1}$.

Eliminating $\lambda_w$ from (\ref{e:n22}) using (\ref{e:c11}), we
get
\begin{equation}\label{e:n23}\begin{array}{rcl}v_9+v_{10}\lambda_v+v_{11}\lambda_v^2+v_{12}\lambda_v^3+v_{13}\lambda_v^4=0,\end{array}\end{equation}
where
%\begin{center}
%\begin{array}{rcl}
\begin{eqnarray}
v_9&=&c^2 L r^2+(3 c^2 r^2 +c^2 q r^2 +c^2 r^3) \lambda _1+c^2 p r^2 \lambda _u-3 c L r \lambda _1 \lambda _u+(-9 c r -3 c
r^2) \lambda _1^2 \lambda _u\nonumber\\&-&c L p r \lambda _u^2 +(-6 c p r-c p r^2 )\lambda _1 \lambda _u^2-c p^2 r \lambda
_u^3,\nonumber\\
v_{10}&=&(-3
-q-r)3cr\lambda _1^2+(r-q)cLr \lambda _u-(6 p +6  q + p q +p r)cr \lambda _1
\lambda _u+(27+9 r )\lambda _1^3 \lambda _u\nonumber\\&-&(
p+2 q) cpr\lambda _u^2-3 L r \lambda _1 \lambda
_u^2 +(27 p +6 p r )\lambda _1^2 \lambda
_u^2-L p r \lambda _u^3+(9 + r )p^2\lambda
_1 \lambda _u^3+p^3 \lambda _u^4,\nonumber\\
v_{11}&=&[(-3-cq-r )\lambda _1 -(p+q+r)\lambda _u]cqr+[(9+r )3\lambda _1+(18+r )p \lambda _u]q\lambda _1 \lambda _u\nonumber\\&+&(3 p^2
\lambda _u-Lr)q \lambda _u^2,\nonumber\\ v_{12}&=&9
q^2 \lambda _1 \lambda _u +3 q r \lambda _1 \lambda _u+3 p q^2
\lambda _u^2+p q r \lambda _u^2,\quad\quad
v_{13}=q^3 \lambda _u
+q^2 r \lambda _u.\nonumber\end{eqnarray}

Equations (\ref{e:n23}) and (\ref{e:n17})
have a common root $\lambda _v$, so
their resultant with respect $\lambda_v$ vanish. Therefore, we find
\begin{equation}\label{e:n24}\begin{array}{lcl}
\mathcal{G}(\lambda_1, \lambda_u)=0,
\end{array}\end{equation}
which is a polynomial equation in terms of $\lambda_1, \lambda_u$.
\begin{comment}
where \begin{eqnarray}\mathcal{G}(\lambda_1, \lambda_u)&=&c^4 L^2 q^8 r^4+4 c^4 L^2
q^7 r^5+6 c^4 L^2 q^6 r^6+4 c^4 L^2 q^5 r^7+c^4 L^2 q^4
r^8+\langle\langle 2906\rangle\rangle \nonumber\\&+&p^6 q^3 r^5 \lambda _u^{10}+3
p^5 q^4 r^5 \lambda _u^{10}+2 p^4 q^5 r^5 \lambda _u^{10}+p^5 q^3
r^6 \lambda _u^{10}+p^4 q^4 r^6 \lambda _u^{10}.\nonumber\end{eqnarray}
\end{comment}

Rewrite (\ref{e:n19}) and (\ref{e:n24}) as polynomials
$G_{\lambda_1}(\lambda_u), \mathcal{G}_{\lambda_1}(\lambda_u)$ of
$\lambda_u$ with coefficients in the polynomial ring $\mathbb{R}[\lambda_1]$ over
real field. According to Lemma 2.1,  equations
$G_{\lambda_1}(\lambda_u)=0$ and
$\mathcal{G}_{\lambda_1}(\lambda_u)=0$ have a common root if and
only if
\begin{equation}\label{e:n25}\begin{array}{lcl}
\mathfrak{R}(G_{\lambda_1}(\lambda_u),\mathcal{G}_{\lambda_1}(\lambda_u))=0,
\end{array}\end{equation}
which is a polynomial of $\lambda_1$ with real coefficients.  Then
(\ref{e:n25}) shows that $\lambda_1$ must be a constant, a
contradiction.

\smallskip
\textbf{Subcase B.} Assume that $a_1= 0$.
\begin{comment}From (\ref{e:c7}) and
(\ref{e:c5}), we obtain
\begin{equation}\label{e:d55}
(\lambda_{u}-\lambda_{v})\omega_{wu}^{v}=(\lambda_{w}-\lambda_{v})\omega_{uw}^{v}=(\lambda_{u}-\lambda_{w})\omega_{vu}^{w}.
\end{equation}
From (\ref{e:d55}) and (\ref{e:c5}), we find
\begin{equation}\label{e:d56}
\omega_{vw}^{u}\omega_{wv}^{u}+\omega_{wu}^{v}\omega_{uw}^{v}+\omega_{vu}^{w}\omega_{uv}^{w}=0.
\end{equation}
\end{comment}
Adding  (\ref{e:d50a}), (\ref{e:d51a}) and (\ref{e:d52a}) and using
(\ref{e:d56}), we obtain
\begin{equation} \label{e:d57}
pq(\omega_{uu}^{1}\omega_{vv}^{1}+c+\lambda_{u}
\lambda_{v})+pr(\omega_{uu}^{1}\omega_{ww}^{1}+c+\lambda_{u}
\lambda_{w})+qr(\omega_{vv}^{1}\omega_{ww}^{1}+c+\lambda_{v}
\lambda_{w})=0. \end{equation}

Using  (\ref{e:d44}) in (\ref{e:d57}) in Lemma 4.2 we find
\begin{eqnarray}\label{e:d58a}
%3\phi^2+2\phi\alpha(\lambda_u+\lambda_v+\lambda_w)+(\alpha^2+1)(\lambda_u\lambda_v+\lambda_u\lambda_w+\lambda_v\lambda_w)+3c=0.
 \left(c+\phi ^2\right) (p q+p r+q r)+\left(\alpha ^2+1\right) 
 \left(p q \lambda _u \lambda _v+p r \lambda _u \lambda _w+q r \lambda _v \lambda _w\right)\\ 
 +\alpha  \phi  \left(p q \lambda _u+p q \lambda _v+p r \lambda _u+p r \lambda _w+q r 
 \lambda _v+q r \lambda _w\right)=0. \nonumber\end{eqnarray}
Using  (\ref{e:d44}) in (\ref{e:n13}), we get
\begin{equation}\label{e:d58}\begin{array}{rcl}
p(3\lambda _u-\lambda _1)(\lambda _u-\lambda _1)(\alpha \lambda
_u+\phi )+q\left(3\lambda _v-\lambda _1\right)\left(\lambda
_v-\lambda _1\right)\left(\alpha \lambda _v+\phi
\right)\\+r\left(3\lambda _w-\lambda _1\right)\left(\lambda
_w-\lambda _1\right)\left(\alpha \lambda _w+\phi \right)=0.
\end{array}\end{equation}

 On the other hand, using (\ref{e:c11}),
(\ref{e:g1}) and (\ref{e:d44}) in (\ref{e:d12}), we obtain
\begin{equation}\label{e:d59}
(n+2)\lambda_1\phi-\alpha(\beta+2\lambda_1^2)=3e_1(\lambda_1).
\end{equation}

Eliminating $\lambda_w$ from (\ref{e:g1}), {(\ref{e:d58a})} and
(\ref{e:d58}) using (\ref{e:c11}), we find
\begin{equation}\label{e:n26}\begin{array}{rcl}
9 \lambda _1^2+r \lambda _1^2+6 p \lambda _1 \lambda _u+p^2 \lambda
_u^2+p r \lambda _u^2+6 q \lambda _1 \lambda _v+2 p q \lambda _u
\lambda _v\\+q^2 \lambda _v^2+q r \lambda _v^2-r
\beta=0,\end{array}\end{equation}
\begin{equation}\label{e:n27}\begin{array}{rcl}
{\left(c+\phi ^2\right) (p q+p r+q r)-3 \alpha  \lambda _1 \phi  (p+q)-(\alpha  \phi  (p-r)+3 (\alpha ^2+1) \lambda _1) p \lambda _u}\\ 
{-(\alpha ^2+1) p^2 \lambda _u^2+\lambda _v q(\alpha   \phi  (r-q)-(\alpha ^2+1) (p \lambda _u+3  \lambda _1 ))-\left(\alpha ^2+1\right) q^2 \lambda _v^2=0,}
\end{array}\end{equation}
\begin{equation}\label{e:n28}\begin{array}{lcl}
(-27   -12 r -p r -q r -r^2)r \phi\lambda _1^2 +(81 +36 r +3 r^2)
\alpha \lambda _1^3-18 p r \phi \lambda _1 \lambda _u\\+81 p \alpha
\lambda _1^2 \lambda _u+24 p r \alpha \lambda _1^2 \lambda _u+(-3
p^2 r \phi -3 p r^2 \phi +27 p^2 \alpha \lambda _1 +4 p^2 r \alpha
\lambda _1 \\+4 p r^2 \alpha \lambda _1) \lambda _u^2+3 p^3 \alpha
\lambda _u^3-3 p r^2 \alpha \lambda _u^3+(-18 q r \phi \lambda _1+81
q \alpha \lambda _1^2+24 q r \alpha  \lambda _1^2\\-6 p q r \phi
\lambda _u+54 p q \alpha \lambda _1 \lambda _u+8 p q r \alpha
\lambda _1 \lambda _u+9 p^2 q \alpha \lambda _u^2) \lambda _v+(-3
q^2 r \phi -3 q r^2 \phi \\+27 q^2 \alpha  \lambda _1+4 q^2 r \alpha
\lambda _1+4 q r^2 \alpha  \lambda _1+9 p q^2 \alpha  \lambda _u)
\lambda _v^2+\left(3 q^3 \alpha -3 q r^2 \alpha \right) \lambda
_v^3=0,
\end{array}\end{equation}
respectively.

Rewrite (\ref{e:n26}), (\ref{e:n27}) and (\ref{e:n28}) as
polynomials
${\mathcal{F}_{2}}_{(\lambda_1,\lambda_u,\alpha,\phi)}(\lambda_v),
{G_{2}}_{(\lambda_1,\lambda_u,\alpha,\phi)}(\lambda_v)$,
 and ${\mathcal{G}_1}_{(\lambda_1,\lambda_u,\alpha,\phi)}(\lambda_v)$ of
$\lambda_v$ with coefficients in polynomial ring
$\mathbb{R}[\lambda_1,\lambda_u,\alpha,\phi]$. According to
Lemma 2.1, the equations
${\mathcal{F}_{2}}_{(\lambda_1,\lambda_u,\alpha,\phi)}(\lambda_v)=0,
{G_{2}}_{(\lambda_1,\lambda_u,\alpha,\phi)}(\lambda_v)=0$, and
${\mathcal{F}_{2}}_{(\lambda_1,\lambda_u,\alpha,\phi)}(\lambda_v)=0,
{\mathcal{G}_1}_{(\lambda_1,\lambda_u,\alpha,\phi)}(\lambda_v)=0$
have a common root if and only if
\begin{equation}\label{e:n29}\begin{array}{lcl}
\mathfrak{R}({\mathcal{F}_{2}}_{(\lambda_1,\lambda_u,\alpha,\phi)}(\lambda_v),
{G_{2}}_{(\lambda_1,\lambda_u,\alpha,\phi)}(\lambda_v))=0,
\end{array}\end{equation}
and
\begin{equation}\label{e:n30}\begin{array}{lcl}
\mathfrak{R}({\mathcal{F}_{2}}_{(\lambda_1,\lambda_u,\alpha,\phi)}(\lambda_v),
{\mathcal{G}_1}_{(\lambda_1,\lambda_u,\alpha,\phi)}(\lambda_v))=0,
\end{array}\end{equation}
which is  polynomials of $\lambda_1,\lambda_u,\alpha,\phi$  with
real coefficients, in degree $4$ and $6$ respectively. Rewrite,
equations (\ref{e:n29}) and (\ref{e:n30}) as polynomials
${\mathcal{F}_3}_{(\lambda_1,\alpha,\phi)}(\lambda_u),
{\mathcal{G}_2}_{(\lambda_1,\alpha,\phi)}(\lambda_u)$ with
coefficients in polynomial ring $\mathbb{R}[\lambda_1,\alpha,\phi]$ over real
field. According to Lemma 2.1, the equations
${\mathcal{F}_3}_{(\lambda_1,\alpha,\phi)}(\lambda_u)=0,
{\mathcal{G}_2}_{(\lambda_1,\alpha,\phi)}(\lambda_u)=0$, have a
common root if and only if
\begin{equation}\label{e:n31}\begin{array}{lcl}
\mathfrak{R}({\mathcal{F}_{3}}_{(\lambda_1,\alpha,\phi)}(\lambda_u),
{\mathcal{G}_2}_{(\lambda_1,\alpha,\phi)}(\lambda_u))=0,
\end{array}\end{equation}
which is a polynomial equation
\begin{equation}\label{e:d61}\begin{array}{rcl}
{\mathcal{F}_{4}}(\alpha,\phi,\lambda_1)=0.
\end{array}
 \end{equation}

Differentiating (\ref{e:d61}) with respect to $e_1$ and using
(\ref{e:d44}) and (\ref{e:d59}), we find a polynomial
\begin{equation}\label{e:n32}\begin{array}{rcl}
{\mathcal{F}_{5}}(\alpha,\phi,\lambda_1)=0.
\end{array}
 \end{equation}

Differentiating (\ref{e:n32}) with respect to $e_1$ and using
(\ref{e:d44}) and (\ref{e:d59}), we find a polynomial
\begin{equation}\label{e:n33}\begin{array}{rcl}
{\mathcal{F}_{6}}(\alpha,\phi,\lambda_1)=0.
\end{array}
 \end{equation}

 Rewrite (\ref{e:d61}), (\ref{e:n32}) and (\ref{e:n33}) as
polynomials ${\mathcal{F}_{4}}_{(\lambda_1,\phi)}(\alpha),
{\mathcal{F}_{5}}_{(\lambda_1,\phi)}(\alpha)$,
 and ${\mathcal{F}_{6}}_{(\lambda_1,\phi)}(\alpha)$ of
$\alpha$ with coefficients in polynomial ring $\mathbb{R}[\lambda_1,\phi]$.
According to Lemma 2.1, the equations
${\mathcal{F}_{4}}_{(\lambda_1,\phi)}(\alpha)=0,
{\mathcal{F}_{5}}_{(\lambda_1,\phi)}(\alpha)=0$, and
${\mathcal{F}_{4}}_{(\lambda_1,\phi)}(\alpha)=0,
{\mathcal{F}_{6}}_{(\lambda_1,\phi)}(\alpha)=0$ have a common root
if and only if
\begin{equation}\label{e:n34}\begin{array}{lcl}
\mathfrak{R}({\mathcal{F}_{4}}_{(\lambda_1,\phi)}(\alpha),
{\mathcal{F}_{5}}_{(\lambda_1,\phi)}(\alpha))=0,
\mathfrak{R}({\mathcal{F}_{4}}_{(\lambda_1,\phi)}(\alpha),
{\mathcal{F}_{6}}_{(\lambda_1,\phi)}(\alpha))=0,
\end{array}\end{equation}
which are  polynomial equations
\begin{equation}\label{e:n35}\begin{array}{rcl}
h_1(\phi, \lambda_1)=0, \quad\mbox{and} \quad h_2(\phi,
\lambda_1)=0,
\end{array}
 \end{equation}
respectively. Finally, rewrite ${h_1}_{\lambda_1}(\phi)=0$ and
${h_2}_{\lambda_1}(\phi)=0$,  as a polynomial equations $\phi$ with
coefficients in polynomial ring $\mathbb{R}[\lambda_1]$ over real field.
According to Lemma 2.1, the equations
${h_1}_{\lambda_1}(\phi)=0$ and ${h_2}_{\lambda_1}(\phi)=0$ have a
common root if and only if
\begin{equation}\label{e:n36}\begin{array}{lcl}
\mathfrak{R}({h_1}_{\lambda_1}(\phi), {h_2}_{\lambda_1}(\phi))=0,
\end{array}\end{equation}
which is a  polynomial equation ${h_3}(\lambda_1)=0$ of $\lambda_1$
with constant coefficients. Thus, the real function $\lambda_1$
satisfies a polynomial equation ${h_3}(\lambda_1)=0$ with constant
coefficients, and, therefore, $\lambda_1$ must be a constant, a contradiction. 

\medskip
\noindent
\textbf{Case 2.} \emph{The case of three distinct principal curvatures}

Suppose that $M$ is a biconservative hypersurface with three
distinct principal curvatures $\lambda_1=-\frac{nH}{2}, \lambda_u$,
$\lambda_v$, with multiplicities $1, p$ and $n-p-1$ respectively.
Further, suppose that $M$ has constant norm of second fundamental
forms. Without losing generality, we choose $e_{1}$ in the direction
of $\grad H$ and therefore shape operator $\mathcal{A}$ of the
hypersurface will take the following form with respect to a suitable
frame  $\{e_{1}, e_{2}, \dots,e_{n}\}$
\begin{equation}\label{e:d62}
\mathcal{A}e_1=-\frac{nH}{2}e_1,  \quad
\mathcal{A}e_i=\lambda_u e_i,\quad \mathcal{A}e_j=\lambda_v
e_j,
\end{equation}
where $i=2,3,\dots, p+1$, and $j=p+2, p+3,\dots, n.$

 Using (\ref{e:d62}) in (\ref{e:c11}) and (\ref{e:g1}), we get
\begin{equation}\label{e:d63}
p\lambda_u+(n-p-1)\lambda_{v}=-3\lambda_1.
\end{equation}
\begin{equation}\label{e:d64}
p\lambda_u^{2}+(n-p-1)\lambda_{v}^{2}=\beta-\lambda_1^2.
\end{equation}

Eliminating $\lambda_v$ from (\ref{e:d64}) using (\ref{e:d63}), we
obtain
\begin{equation}\label{e:d65}
8 \lambda _1^2+n \lambda _1^2-p \lambda _1^2+6 p \lambda _1 \lambda
_u-p \lambda _u^2+n p \lambda _u^2=(-1+n-p) \beta.
\end{equation}

Similarly, eliminating $\lambda_u$ from (\ref{e:d64}) using
(\ref{e:d63}), we get
\begin{equation}\label{e:d66}
9 \lambda _1^2+p \lambda _1^2+\left(-6 \lambda _1+6 n \lambda _1-6 p
\lambda _1\right) \lambda _v+\left(1-2 n+n^2+p-n p\right) \lambda
_v^2=p \beta.
\end{equation}

Differentiating (\ref{e:d65}) with respect to $e_1$, we find
\begin{equation}\label{e:d70}
e_1(\lambda_u) =\mu e_1(\lambda_1),\quad e_1(\mu)=\frac{-8+p
\left(1-6 \mu +\mu ^2\right)-n \left(1+p \mu ^2\right)}{p \left(3
\lambda _1+(-1+n) \lambda _u\right)}e_1(\lambda_1),
\end{equation}
where $\mu=-\frac{(8+n-p) \lambda _1+3 p \lambda _u}{p \left(3
\lambda _1+(-1+n) \lambda _u\right)}$ and to find $e_1(\mu)$ we have
used the first expression of (\ref{e:d70}).

Differentiating (\ref{e:d65}) with respect to $e_j$ and using
(\ref{e:c6}), we find
\begin{equation}\label{e:d67}
e_{j}(\lambda_u)=0, \omega^j_{ii}=0,
\end{equation}
where $i=2,3,\dots, p+1$, and $j=p+2, p+3,\dots, n.$

Differentiating (\ref{e:d66}) with respect to $e_i$ and using
(\ref{e:c6}), we have
\begin{equation}\label{e:d68}
e_{i}(\lambda_v)=0, \omega^i_{jj}=0,
\end{equation}
where $i=2,3,\dots, p+1$, and $j=p+2, p+3,\dots, n.$

Also, putting  $k\in \{p+2, p+3,\dots, n\}$, and $j, i \in
\{2,3,\dots, p+1\}$ in (\ref{e:c7}) and using (\ref{e:c5}) and
(\ref{e:c7}), we get
\begin{equation}\label{e:d69}
\omega^j_{ik}=\omega^k_{ij}=\omega^k_{ji}=\omega^i_{jk}=0, \quad
i\neq j.
\end{equation}

 Evaluating $g(R(e_i,e_j)e_i,e_j)$ using (\ref{e:b5}),
(\ref{e:d62}), (\ref{e:d67}), (\ref{e:d68}) and (\ref{e:d69}), we
obtain
\begin{equation}\label{e:d71}
\omega_{ii}^1\omega_{jj}^1=-c-\lambda_u\lambda_v,\quad\mbox{for}\quad
j\in \{p+2, p+3,\dots, n\}, i \in \{2,3,\dots, p+1\},
\end{equation}
where $\omega_{ii}^1=\frac{e_{1}(\lambda_u)}{\lambda_u-\lambda_1},
\omega_{jj}^1=\frac{e_{1}(\lambda_v)}{\lambda_v-\lambda_1}$.

Also, equation (\ref{e:c14}) is valid for three distinct principal
curvatures. Therefore, from (\ref{e:c14}), we find
\begin{equation}\label{e:d72}
e_{1}(\frac{e_{1}(\lambda_u)}{\lambda_u-\lambda_1})=(\frac{e_{1}(\lambda_u)}{\lambda_u-\lambda_1})^{2}+\lambda_1\lambda_u+c,
\end{equation}
\begin{equation}\label{e:d73}
e_{1}(\frac{e_{1}(\lambda_v)}{\lambda_v-\lambda_1})=(\frac{e_{1}(\lambda_v)}{\lambda_v-\lambda_1})^{2}+\lambda_1\lambda_v+c.
\end{equation}

Using (\ref{e:c6}), (\ref{e:d63}) and (\ref{e:d70}) in
(\ref{e:d71}), we get
\begin{equation}\label{e:d74}\begin{array}{lcl}
 \frac{\lambda _u \left(-\lambda _1+\lambda _u\right) \left(3 \lambda _1+p \lambda _u\right) \left((2+n-p) \lambda _1
 +p \lambda _u\right)}{-1+n-p}=e_1^2(\lambda_1) \mu  (3+p \mu ).\end{array}
\end{equation}

Using (\ref{e:c6}) and (\ref{e:d70}) in (\ref{e:d72}), we obtain
\begin{equation}\label{e:d75}\begin{array}{lcl}
 \mu(\lambda_u-\lambda_1)e_1e_1(\lambda_1)= \lambda _1 \lambda _u \left(-\lambda _1
 +\lambda _u\right){}^2+A e_1^2(\lambda_1),\end{array}
\end{equation}
where $A=\frac{-\left(8+n+n p \mu ^2-p \left(1-6 \mu +\mu
^2\right)\right) \left(\lambda _1-\lambda _u\right)+p \mu  (-1+2 \mu
) \left(3 \lambda _1+(-1+n) \lambda _u\right)}{p \left(3 \lambda
_1+(-1+n) \lambda _u\right)}.$

 Using (\ref{e:c6}) and (\ref{e:d70}) in (\ref{e:d73}), we find
\begin{equation}\label{e:d76}\begin{array}{rcl}
 (3+2\mu)(2\lambda+4\lambda_1)e_1e_1(\lambda_1)=Be_1^2(\lambda_1)-\lambda_1(3\lambda_1+2\lambda)(2\lambda+4\lambda_1)^2,\end{array}
\end{equation}
where
$B=\frac{[\lambda(36\mu^2+102\mu+83)+\lambda_1(48\mu^2+126\mu+103)]}{3(\lambda_1+\lambda)}.$

Eliminating $e_1e_1(\lambda_1)$ from (\ref{e:d75}) using
(\ref{e:d76}), we obtain
\begin{equation}\label{e:d77}\begin{array}{rcl}
 C e_1^2(\lambda_1)=D,\end{array}
\end{equation}
where $C=(3+2\mu)(2\lambda+4\lambda_1)A-\mu(\lambda-\lambda_1)B$ and $
D=-\lambda_1(\lambda-\lambda_1)(2\lambda+4\lambda_1)[\mu(3\lambda_1+2\lambda)(4\lambda_1+2\lambda)+\lambda(3+2\mu)(\lambda-\lambda_1)].$

Eliminating $e_1^2(\lambda_1)$ from (\ref{e:d74}) using
(\ref{e:d77}), we find
\begin{equation}\label{e:d78}\begin{array}{lcl}
 (3\mu+2\mu^2)D-C
\lambda(\lambda-\lambda_1)(3\lambda_1+2\lambda)(4\lambda_1+2\lambda)=0.\end{array}
\end{equation}

Finally, eliminating $\lambda$ from (\ref{e:d65}) and (\ref{e:d78}),
we get a polynomial equation $\varphi(H)=0$ in $H$ with constant
coefficients. Thus, the real function $H$ satisfies a polynomial
equation $\varphi(H) = 0$ with constant coefficients, and,
therefore, it must be a constant.

\medskip
\noindent
\textbf{Case 3.} \emph{The case of two distinct principal curvatures}

Suppose that $M$ is a biconservative hypersurface with two
distinct principal curvatures $\lambda_1=-\frac{nH}{2}$ and $\lambda$, with multiplicities $1$ and $n-1$ respectively. Without losing generality, we choose $e_{1}$ in the direction
of $\grad H$ and therefore shape operator $\mathcal{A}$ of the
hypersurface will take the following form with respect to a suitable
frame  $\{e_{1}, e_{2}, \dots,e_{n}\}$
\begin{equation}\label{e:d79} \mathcal{A}e_1=-\frac{nH}{2}e_1,  \quad
\mathcal{A}e_i=\lambda e_i, \quad i=2,\dots,n-1.
\end{equation}

Using (\ref{e:d79}) in (\ref{e:c11}) and (\ref{e:g1}), we get
\begin{equation}\label{e:d80}
\lambda=-\frac{3\lambda_1}{n-1},
\end{equation}
\begin{equation}\label{e:d81}
(n-1)\lambda^{2}=\beta-\lambda_1^2,
\end{equation}
respectively.

Further, if $M$ has constant norm of second fundamental
forms, then, from (\ref{e:d80}) and (\ref{e:d81}), we get $\lambda_1$ is a
constant, which gives that $H$ constant, a contradiction.

\noindent
Combining \textbf{Cases 1, 2}, and \textbf{3} it follows that $M$ has constant mean curvature. 

\medskip
Now,  using (\ref{e:b5}) we get that the scalar curvature  $\rho$  is given by
\begin{equation}\label{e:d82}
\rho = n(n-1)c+n^2H^2+\beta,
\end{equation}
which is also  constant.

% ------------------------------------------------------------------------
\bibliography{xbib}

\begin{thebibliography}{99}
\normalsize
\bibitem{gu4} A. Arvanitoyeorgos, F. Defever,  G. Kaimakamis, V. Papantoniou, {\it Biharmonic Lorentzian hypersurfaces in $\mathbb{E}_1^4$}, Pac. J. Math. 229(2) (2007) 293--305.

%\bibitem{b6} A. Z. Petrov, Einstein spaces, Pergamon Press, Oxford; 1969.

\bibitem{cadmo}
R. Caddeo, S. Montaldo, C. Oniciuc and P. Piu, {\it Surfaces in three-dimensional space forms with divergence-free stress-bienergy tensor}, Ann. Mat. Pura Appl. 193(2) (2014) 529--550.

\bibitem{biha7} B. Y. Chen, {\it Total mean curvature and submanifolds of finite type}, 2nd edition, World Scientific, Hackensack, NJ,
2015.

\bibitem{r20} Deepika, R. S. Gupta, {\it Biharmonic
hypersurfaces in $\mathbb{E}^5$ with zero scalar curvature}, Afr.
Diaspora J. Math. 18(1) (2015) 12--26.

\bibitem{r23} Deepika, R. S. Gupta, A. Sharfuddin, {\it Biharmonic
hypersurfaces with constant scalar curvature in $\mathbb{E}^5_s$}, Kyungpook
Math. J. 56 (2016) 273--293.

\bibitem{r13} I. Dimitric.
{\it Submanifolds of  $\mathbb{E}^{n}$ with harmonic mean curvature vector}, Bull.
Inst. Math. Acad. Sinca,  20 (1992) 53--65.

\bibitem{5} J. Eells, J. Sampson, {\it Harmonic mappings of Riemannian
manifolds}, Amer. J.   Math. 86 (1964) 109--160.

\bibitem{C-On} C. Oniciuc, Biharmonic maps between Riemannian manifolds,
An. Stiint. Univ. Al. I. Cuza Iasi. Mat. (N.S.) 48 (2002), 237--248.


\bibitem{7} D. Fetcu, C. Oniciuc, A. L. Pinheiro, {\it CMC
biconservative surfaces in $\mathbb{S}^n \times \mathbb{R}$ and
$\mathbb{H}^n \times \mathbb{R}$}, J. Math. Anal. Appl. 425 (2015)
588--609.

\bibitem{FeOn} D. Fetcu and C. Oniciuc, {\it Biharmonic and biconservative hypersurfaces in space forms}, arXiv:2012.12476v2

\bibitem{biha15} R. S. Gupta, {\it On biharmonic hypersurfaces in Euclidean
space of arbitrary dimension}, Glasgow Math. J. 57 (2015)  633--642.

\bibitem{r14} R. S. Gupta, {\it Biharmonic hypersurfaces in $\mathbb{E}_{s}^{5}$}, An. St. Univ. Al. I. Cuza, Tomul LXII, 2(2) (2016) 585--593.

\bibitem{r18} R. S. Gupta, {\it Biharmonic hypersurfaces in $\mathbb{E}^6$ with constant scalar curvature}, Intern. J. Geom. 5(2) (2016) 39--50.

\bibitem{gu5} R. S. Gupta, {\it Biconservative hypersurfaces in Euclidean 5-space}, Bull. Iran. Math. Soc. 45 (2019) 1117--1133.

\bibitem{gu1} R. S. Gupta, A. Sharfuddin, {\it Biconservative Lorentz hypersurfaces in $\mathbb{E}^{n+1}_1$ with complex eigenvalues}, Rev. Uni\'on Mat. Arg. 60(2)  (2019)  595--610.


\bibitem{6} T. Hasanis, T. Vlachos, {\it Hypersurfaces in $\mathbb{E}^{4}$ with harmonic mean curvature  vector field}, Math. Nachr. 172 (1995) 145--169.

\bibitem{3} D. Hilbert, {\it Die Grundlagen der Physik}, Math. Ann. 92 (1924) 1--32.


\bibitem{4}  G. Y. Jiang, {\it The conservation law for 2-harmonic maps between Riemannian manifolds}, Acta Math. Sin. 30 (1987) 220--225.

\bibitem{ke} K. Kendig, {\it Elementary Algebraic Geometry}, GTM 44, Springer-Verlag 1977.

\bibitem{NiCe} S. Nistor, C. Oniciuc, {\it On the uniqueness of complete biconservative surfaces in $\Bbb R^3$}, Proc. Amer. Math. Soc. 147(3) (2019) 1231--1245.

\bibitem{gu3} S. Montaldo, C. Oniciuc, A. Ratto, {\it Biconservative surfaces}, J. Geom. Anal. 26(1) (2016) 313--329.

\bibitem{MoPa} S. Montaldo, A. Pampano, {\it On the existence of closed biconservative surfaces in space forms},
arXiv:2009.03233, to appear in Commun. Anal. Geom.


\bibitem{12} T. Sasahara, {\it Tangentially biharmonic Lagrangian H-umbilical
submanifolds in complex space forms}, Abh. Math. Semin. Univ. Hambg.,
85 (2015) 107--123.

\bibitem{k2} N. C. Turgay,  {\it H-hypersurfaces with three distinct principal curvatures in the Euclidean
spaces},  Ann. Mat. Pura Appl. 194(6) (2015) 1795--1807.

\bibitem{gu2} N. C. Turgay, A. Upadhyay, {\it On biconservative hypersurfaces in 4-dimensional Riemannian space forms}, Math. Nachr. 292(4) (2019) 905--921.

\bibitem{13} A. Upadhyay, N. C. Turgay, {\it A
classification of biconservative hypersurfaces in a pseudo-Euclidean
space}, J. Math. Anal. Appl. 444 (2016) 1703-1720.


\end{thebibliography}

\bigskip
\noindent
Author's address:\\
\textbf{Ram Shankar Gupta}\\
University School of Basic and Applied
Sciences, Guru Gobind Singh Indraprastha University, Sector-16C,
Dwarka, New Delhi-110078,
India\\
\textbf{Email:} ramshankar.gupta@gmail.com

\smallskip
\noindent
\textbf{Andreas Arvanitoyeorgos}\\
University of Patras, Department of Mathematics, GR-26504 Rion, Greece, and\\
Hellenic Open University, Aristotelous 18, GR-26335 Patras, Greece\\
\textbf{Email:} arvanito@math.upatras.gr
 \end{document}